\newcommand{\IC}{\mathbb{C}}
\newcommand{\IR}{\mathbb{R}}
\newcommand{\z}{ \Id \mu }
\newcommand{\IN}{\mathbb{N}}
\newcommand{\Id}{{\rm d}}
\newcommand{\f}{\frac}
\newcommand{\nn}{\nonumber}
\newtheorem{theorem}{THEOREM}[section]
\newtheorem{Remark}[theorem]{Remark}
\newtheorem{Theorem}[theorem]{Theorem}
\newtheorem{Proposition}[theorem]{Proposition}
\newtheorem{Definition}[theorem]{Definition}
\begin{document}

\title{Sequences of Laplacian cut-off functions}

\author[B. G\"uneysu]{Batu G\"uneysu}
\address{Batu G\"uneysu, Institut f\"ur Mathematik, Humboldt-Universit\"at zu Berlin, 12489 Berlin, Germany} \email{gueneysu@math.hu-berlin.de}

\maketitle 

\begin{abstract} 
We derive several new applications of the concept of \emph{sequences of Laplacian cut-off functions} on Riemannian manifolds (which we prove to exist on geodesically complete Riemannian manifolds with nonnegative Ricci curvature): In particular, we prove that this existence implies $\mathsf{L}^q$-estimates of the gradient, a new density result of smooth compactly supported functions in Sobolev spaces on the whole $\mathsf{L}^q$-scale, and a slightly weaker and slightly stronger variant of the conjecture of Braverman, Milatovic and Shubin on the nonnegativity of $\mathsf{L}^2$-solutions $f$ of $(-\Delta+1)f\geq 0$. The latter fact is proved within a new notion of positivity preservation for Riemannian manifolds which is related to stochastic completeness.
\end{abstract}

\section{Setting and notation}

Let $M\equiv (M,g)$ be a connected smooth Riemannian $m$-manifold without boundary. We will denote the corresponding negative Laplace-Beltrami operator with $\Delta$, the distance function with $\Id(\bullet,\bullet)$, the open balls with $\mathrm{B}_a(x)$, $x\in M$, $a>0$, and the volume measure with $\mu(\Id x):=\mathrm{vol}(\Id x)$, where we shall often simply write $\int f \z$ instead of $\int_M f\z$. Let $p(\bullet,\bullet,\bullet)$ denote the minimal positive heat kernel of $M$, and for any $x\in M$ let $\mathbb{P}^x$ denote the law of a Brownian motion on $M$ starting from $x$, with $\zeta$ the lifetime of continuous paths with explosion on $M$. Then the Kato class $\mathsf{K}(M)\supset \mathsf{L}^{\infty}(M)$ is defined to be the complex linear space of Borel functions $v:M\to\IC$ such that 
$$
\lim_{t\to 0+}\sup _{x\in M}\int^t_0\mathbb{E}^x\left[1_{\{s<\zeta\}}|v(X_s)|\right] \Id s=0,
$$
or equivalently\footnote{We use here the more common normalization that $\mathbb{P}^x$ is a $-\Delta/2$ diffusion (and not a $-\Delta$ diffusion).}
$$
\lim_{t\to 0+}\sup_{x\in M}\int^t_0\int_Mp(s,x,y)|v(y)|\mu(\Id y) \Id s=0.
$$
If $E\to M$ is a smooth Hermitian vector bundle, then whenever there is no danger of confusion, we will denote the underlying Hermitian structure simply with $(\bullet,\bullet)_x$, $x\in M$, $\left|\bullet\right|_x:=\sqrt{(\bullet,\bullet)}_x$ will stand for the corresponding norm and also for the operator norm on $E_x$. Using $\mu$ we get the corresponding $\mathsf{L}^q$-spaces of sections $\Gamma_{\mathsf{L}^q}(M,E)$, whose norms and operator norms are denoted with $\left\|\bullet\right\|_q$, $q\in [1,\infty]$, with $\left\langle\bullet,\bullet \right\rangle$ the canonical scalar product on $\Gamma_{\mathsf{L}^2}(M,E)$. The symbol '$\dagger$' will denote the formal adjoint with respect to $\left\langle\bullet,\bullet \right\rangle$ of a smooth linear partial differential operator that acts on some $\Gamma_{\mathsf{C}^{\infty}_{\mathrm{c}}}(M,E)$. For example, we have $-\Delta=\Id^{\dagger} \Id$. \\
We equip all smooth tensor type bundles corresponding to $\mathrm{T} M$ with their canonical smooth Euclidean structure, where we will freely identify $\mathrm{T}^* M$ with $\mathrm{T} M$ with respect to $g$. For example, the gradient of some $f\in\mathsf{C}^k(M)$ simply becomes $\Id f\in\Omega^1_{\mathsf{C}^{k-1}}(M)$,  where we write $\Omega^1_{\#}(M)=\Gamma_{\#}(M,\mathrm{T}^* M)$ for spaces of $1$-forms. We let $\nabla$ denote the Levi-Civita connection on $M$ and complexify these data in the sequel. 

\section{Main results}

\subsection{Cut-off functions}

The aim of this paper is to derive several important applications of the following concept:

\begin{Definition}\label{def} We say that $M$ admits a \emph{sequence $(\chi_n)\subset \mathsf{C}^{\infty}_{\mathrm{c}}(M)$ of Laplacian cut-off functions}, if $(\chi_n)$ has the following properties\emph{:}
\begin{itemize}
\item[\emph{(C1)}] $0 \le \chi_n(x) \le 1$ for all $n\in\IN$, $x \in M$,

\item[\emph{(C2)}] for all compact $K\subset M$, there is an $n_0(K)\in\IN$ such that for all $n\geq n_0(K)$ one has $\chi_n\mid_{K}= 1$,

\item[\emph{(C3)}] $\left\|\Id\chi_n \right\|_{\infty}:=\sup_{x \in M} \left|\Id\chi_n(x)\right|_x \to 0$ as $n\to \infty$,

 \item[\emph{(C4)}] $\left\|\Delta\chi_n\right\|_{\infty}:=\sup_{x \in M} \left|\Delta \chi_n(x)\right| \to 0$ as $n\to\infty$. 
\end{itemize}
 \end{Definition}

It is intuitively clear that geodesic completeness cannot be dropped in this context. Ultimately, one has:

\begin{Theorem}\label{main}\emph{a)} $M$ is geodesically complete, if and only if $M$ admits a \emph{sequence $(\chi_n)\subset \mathsf{C}^{\infty}_{\mathrm{c}}(M)$ of first order cut-off functions}, meaning that $(\chi_n)$ satisfies (C1), (C2), (C3).\\
\emph{b)} Assume that $M$ is geodesically complete with a nonnegative Ricci curvature. Then $M$ admits a sequence of Laplacian cut-off functions. More precisely, for any sequence $(a_n) \subset (0,\infty)$ with $a_n\to \infty$ there is a sequence $(\chi_n)\subset \mathsf{C}^{\infty}_{\mathrm{c}}(M)$ with (C1), (C2) and 
$$
\left\|\Id\chi_n\right\|_{\infty} =\mathrm{O}(1/a_n), \> \left\|\Delta \chi_n\right\|_{\infty}=\mathrm{O}(1/a_n^2), \>\>\>\text{  $n\to\infty$.}
$$
\end{Theorem}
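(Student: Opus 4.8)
For part (a), the "only if" direction (completeness implies existence of first-order cut-off functions) is the heart of the matter, and the "if" direction should be a relatively short argument showing that a sequence satisfying (C1)–(C3) forces metric completeness. For (b), the construction will be via a smoothed distance function combined with the Laplacian comparison theorem under $\mathrm{Ric}\geq 0$. Let me describe each.

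**Part (a), sufficiency of completeness.** Fix a reference point $o\in M$ and let $r(x):=\Id(o,x)$. Under geodesic completeness, closed balls $\overline{\mathrm{B}_a(o)}$ are compact (Hopf–Rinow), so $r$ is proper and $1$-Lipschitz, hence $|\Id r|\leq 1$ a.e. where it is defined. The function $r$ is only Lipschitz, not smooth, so the plan is: (i) mollify $r$ in each chart and patch via a partition of unity to obtain, for each $n$, a smooth function $\tilde r_n$ with $|\tilde r_n - r|\leq 1$ and $\|\Id \tilde r_n\|_\infty\leq 2$ (say), a standard construction (Greene–Wu type smoothing of Lipschitz functions); (ii) pick a fixed smooth $\varphi:\IR\to[0,1]$ with $\varphi=1$ on $(-\infty,1]$, $\varphi=0$ on $[2,\infty)$, $|\varphi'|\leq 2$, and set
$$
\chi_n(x):=\varphi\!\left(\tilde r_n(x)/a_n\right),\qquad a_n\to\infty.
$$
Then $\chi_n\in\mathsf{C}^\infty_{\mathrm c}(M)$ by properness of $\tilde r_n$ (its support lies in $\{r\leq 2a_n+1\}$), (C1) is immediate, (C2) holds because any compact $K$ satisfies $r\leq R$ on $K$, hence $\tilde r_n\leq R+1\leq a_n$ for large $n$ so $\chi_n|_K=1$, and (C3) follows from $\|\Id\chi_n\|_\infty\leq \|\varphi'\|_\infty\|\Id\tilde r_n\|_\infty/a_n = \mathrm O(1/a_n)$. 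The one place needing care is the smoothing step: one must control the gradient of $\tilde r_n$ uniformly, which is why I mollify with a scale that shrinks fast enough (radius $\ll 1$ in each chart) while only asking for the crude bound $|\tilde r_n-r|\leq 1$ — no second-order control is needed here.

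**Part (a), necessity.** Suppose $(\chi_n)$ satisfies (C1)–(C3) but $M$ is incomplete; take a unit-speed geodesic $\gamma:[0,L)\to M$ of finite length $L$ that cannot be extended, so $\gamma(t)$ leaves every compact set as $t\to L$. Fix any $n$. Since $\{\gamma(t):t\in[0,L_0]\}$ is compact for $L_0<L$, by (C2) we may pass to a subsequence so that each $\chi_n$ equals $1$ on a neighborhood of $\gamma(0)$; on the other hand, since $\chi_n$ has compact support and $\gamma(t)$ eventually exits $\mathrm{supp}\,\chi_n$, there is $t_n<L$ with $\chi_n(\gamma(t_n))=0$. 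Then
$$
1 = |\chi_n(\gamma(0))-\chi_n(\gamma(t_n))| = \Big|\int_0^{t_n}\tfrac{\Id}{\Id t}\chi_n(\gamma(t))\,\Id t\Big| \le \int_0^{t_n}|\Id\chi_n(\gamma(t))|_{\gamma(t)}\,\Id t \le L\,\|\Id\chi_n\|_\infty,
$$
so $\|\Id\chi_n\|_\infty\geq 1/L$ for all $n$, contradicting (C3).

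**Part (b).** Now assume $M$ geodesically complete with $\mathrm{Ric}\geq 0$. We refine the construction above so that the smoothed distance function also has controlled Laplacian. The classical Laplacian comparison theorem gives, in the distributional/barrier sense, $\Delta r \leq (m-1)/r$ on $M\setminus\{o\}$, and the cut locus is null; moreover $\Delta r$ is bounded above on each annulus $\{1\leq r\leq R\}$. The strategy is to use a good smoothing $\rho_n$ of $r$ on the relevant annulus $\{a_n/4 \leq r \leq 4a_n\}$ (where we will cut off) satisfying $|\rho_n - r|\leq 1$, $|\Id\rho_n|\leq 2$, and $|\Delta\rho_n| \leq C/a_n$ on that annulus, with $C$ independent of $n$ — such smoothings exist, e.g. by combining Cheeger–Gromov–Taylor / Greene–Wu smoothing with the comparison estimate $|\Delta r|\leq (m-1)/r \leq C/a_n$ there (the lower bound $\Delta r \geq -C/a_n$ on that annulus requires a small additional argument, but under $\mathrm{Ric}\geq 0$ a bound of the form $|\Delta r|\leq C(1+1/r)$ away from the cut locus is standard, and one then smooths it away). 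Then set $\chi_n := \varphi(\rho_n/a_n)$ with $\varphi$ as before but now also $|\varphi''|\leq 4$. Properties (C1), (C2) follow as in (a). For the derivative bounds, compute
$$
\Id\chi_n = \tfrac{1}{a_n}\,\varphi'(\rho_n/a_n)\,\Id\rho_n,\qquad
\Delta\chi_n = \tfrac{1}{a_n^2}\,\varphi''(\rho_n/a_n)\,|\Id\rho_n|^2 + \tfrac{1}{a_n}\,\varphi'(\rho_n/a_n)\,\Delta\rho_n .
$$
On $\{\chi_n\notin\{0,1\}\}$ we have $a_n\leq \rho_n\leq 2a_n$, hence $a_n/2\leq r\leq 3a_n$ roughly, and there $|\Id\rho_n|\leq 2$, $|\Delta\rho_n|\leq C/a_n$, so
$$
\|\Id\chi_n\|_\infty \leq \tfrac{2\|\varphi'\|_\infty}{a_n} = \mathrm O(1/a_n),\qquad
\|\Delta\chi_n\|_\infty \leq \tfrac{4\|\varphi''\|_\infty}{a_n^2} + \tfrac{C\|\varphi'\|_\infty}{a_n^2} = \mathrm O(1/a_n^2),
$$
which is exactly the claimed decay. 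Since $(a_n)$ with $a_n\to\infty$ was arbitrary, the statement follows.

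**Main obstacle.** The routine part is the cut-off composition and the verification of (C1)–(C3); the real work is the smoothing lemma in part (b): producing, uniformly in $n$, a smooth function on the annulus $\{c a_n\leq r\leq C a_n\}$ that is $\mathsf{C}^0$-close to $r$ while having both $|\Id\rho_n|$ bounded and $|\Delta\rho_n|=\mathrm O(1/a_n)$. The upper Laplacian bound is immediate from comparison, but controlling the Laplacian of the mollified function from below — and ensuring the mollification scale does not destroy the $\mathrm O(1/a_n)$ gain — is the delicate point and will occupy the bulk of the proof; one either invokes an existing smoothing theorem for functions with one-sided Laplacian bounds or builds $\rho_n$ directly from the heat semigroup applied to $r$ at a well-chosen small time.
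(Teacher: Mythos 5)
Your part (a) is fine. For the ``if'' direction you argue by contradiction along an inextendible finite-length geodesic, while the paper argues directly that balls are relatively compact by estimating $\Id(x,\mathscr{O})$ on the complement of $\mathrm{supp}\,\chi_n$; both are elementary and equivalent in content. For the ``only if'' direction you sketch the standard Greene--Wu smoothing of the distance function composed with a profile $\varphi$, which is exactly the construction the paper outsources to Shubin's Proposition 4.1.

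Part (b), however, has a genuine gap, and it is precisely the gap the paper goes out of its way to warn about. Your argument hinges on the assertion that, under $\mathrm{Ric}\geq 0$, ``a bound of the form $|\Delta r|\leq C(1+1/r)$ away from the cut locus is standard.'' It is not: the Laplacian comparison theorem gives only the one-sided upper bound $\Delta r\leq (m-1)/r$ from a lower Ricci bound. A lower bound on $\Delta r$ requires an \emph{upper} bound on the (sectional) curvature, which you do not have. Without a two-sided pointwise bound on $\Delta r$ away from $\mathrm{Cut}(\mathscr{O})$ (and with the cut locus potentially dense, even the smoothing step is delicate), the mollified $\rho_n$ has no reason to satisfy $|\Delta\rho_n|=\mathrm{O}(1/a_n)$, and the whole estimate on $\Delta\chi_n$ collapses. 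Your final paragraph acknowledges the difficulty, but the proposed escape routes (an ``existing smoothing theorem for one-sided Laplacian bounds,'' or heat-semigroup smoothing of $r$) do not by themselves produce a two-sided bound out of a one-sided one; this is the actual mathematical obstruction, not a technicality.

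The paper's route is structurally different and evades this obstruction entirely. It invokes the Cheeger--Colding / Wang--Zhu cut-off lemma, which produces a smooth $\chi_{\tilde g}$ on $\mathrm{B}_{\tilde g,2}(\mathscr{O})$ with dimensional bounds $|\Id\chi_{\tilde g}|_{\tilde g}\leq C(m)$, $|\Delta_{\tilde g}\chi_{\tilde g}|\leq C(m)$, by solving a Dirichlet problem on the annulus $\mathrm{B}_{\tilde g,2}\setminus\overline{\mathrm{B}_{\tilde g,1}}$ and then estimating the solution via the maximum principle and Laplacian comparison --- so comparison is used to control $\chi$ and $\Id\chi$, while the Laplacian bound is built into the equation being solved rather than extracted from $\Delta r$. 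The $\mathrm{O}(1/a_n)$ and $\mathrm{O}(1/a_n^2)$ rates then follow by the clean scaling $g_n:=\frac{4}{a_n^2}g$ (which preserves $\mathrm{Ric}\geq 0$ and rescales $\Id$ and $\Delta$ with the expected homogeneity). If you want to salvage a distance-function-based construction you would need an additional upper curvature hypothesis; under $\mathrm{Ric}\geq 0$ alone, you need the Cheeger--Colding input or an equivalent.
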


\begin{proof} a) See Proposition 4.1 in \cite{shubin} for the \lq\lq{}only if\rq\rq{} part. The other direction should be well-known (cf. \cite{pigo} and the references therein where this problem is considered in the context of $p$-parabolicity). We give the short proof for the convenience of the reader: Assume that $M$ admits a sequence of first order cut-off functions $(\chi_n)$. Then given $\mathscr{O}\in M$, $r>0$, we are going to show that there is a compact set $K(\mathscr{O},r)\subset M$ such that $\Id(x,\mathscr{O})>r$ for all $x\in M\setminus K(\mathscr{O},r)$, which implies that any open geodesic ball is relatively compact. To see this, we pick a compact $K(\mathscr{O})\subset M$ such that $\mathscr{O}\in K(\mathscr{O})$, and a number $n(\mathscr{O},r)\in \IN$ large enough such that $\chi_{n(\mathscr{O},r)} =1$ on $K(\mathscr{O})$ and 
$$
\sup_{x \in M} \left|\Id\chi_{n(\mathscr{O},r)}(x)\right|_x\leq 1/(r+1).
$$
Then with $K(\mathscr{O},r):=\mathrm{supp}(\chi_{n(\mathscr{O},r)})$ one easily gets  
$$
\Id(x,\mathscr{O})\geq r+1 \> \text{ for all $x\in M\setminus K(\mathscr{O},r)$.}
$$
b) We will combine a highly nontrivial result from Riemannian rigidity theory, with a careful scaling argument. The following result has been extablished by Cheeger/Colding and Wang/Zhu (cf. Lemma 1.4 in \cite{wang}, the proof of which is based on arguments from \cite{cheeger}): There is a constant $C(m)>0$, which only depends on $m$, such that for any $\mathscr{O}\in M$ and any geodesically complete Riemannian structure $\tilde{g}$ on $M$ with $\mathrm{Ric}_{\tilde{g}}\geq 0$, there is a function $\chi_{\tilde{g}}=\chi_{\tilde{g},\mathscr{O}}\in \mathsf{C}^{\infty}(M)$ which satisfies
\begin{align}\label{wnn0}
&0\leq \chi_{\tilde{g}}\leq 1,\>\mathrm{supp}(\chi_{\tilde{g}})\subset \mathrm{B}_{\tilde{g},2}(\mathscr{O}), \>\chi_{\tilde{g}}=1\text{ on }\mathrm{B}_{\tilde{g},1}(\mathscr{O}), \\
&|\Id \chi_{\tilde{g}}|_{\tilde{g}}\leq C(m),\> |\Delta_{\tilde{g}}\chi_{\tilde{g}}|\leq C(m).\label{wnn}
\end{align}
Now fix some $\mathscr{O}\in M$. Then for $g_n:=\f{4}{a_n^2}g$ we have
\begin{align}
&\mathrm{B}_{g_n,\f{2a}{a_n}}(\mathscr{O})=\mathrm{B}_a(\mathscr{O})\>\text{ for any $a>0$},\nn\\
&\mathrm{Ric}_{g_n}=\mathrm{Ric}\geq 0, \>\>\Delta_{g_n}=\f{a_n^2}{4}\Delta,\nn\\
&|\alpha|^2_{g_n}=\f{a_n^2}{4}|\alpha|^2\>\text{ for any $\alpha\in\Omega^1_{\mathsf{C}^{\infty}}(M)$,}\nn
\end{align}  
thus $\mathrm{B}_{g_n,2}(\mathscr{O})=\mathrm{B}_{a_n}(\mathscr{O})$, $\mathrm{B}_{g_n,1}(\mathscr{O})=\mathrm{B}_{\f{a_n}{2}}(\mathscr{O})$, and the claim follows from setting $\chi_{n}:=\chi_{g_n}$, where $\chi_{g_n}$ is chosen with (\ref{wnn0}), (\ref{wnn}) for $\tilde{g}=g_n$. 
\end{proof}

Let us continue with some remarks on Theorem \ref{main}. Firstly, the existence of a sequence of Laplacian cut-off functions has only been established so far under the rather restrictive \lq\lq{}$\mathsf{C}^{\infty}$-bounded geometry\rq\rq{} assumption on $M$ (cf. Proposition B.3 in \cite{Br}), meaning that $M$ has a positive injectivity radius and all Levi-Civita derivatives of the curvature tensor of $M$ have to be bounded, a result which is thus considerably improved by Theorem \ref{main} b) (in the class of $M$\rq{}s with nonnegative Ricci curvature).\\
Secondly, we emphasize that a lower bound on the Ricci curvature is enough in Theorem \ref{main} b) to guarantee (C4) is rather suprising in the following sense: A canonical \emph{explicit} approach (see e.g. the proof of Proposition 4.1 in \cite{shubin}) for constructing a sequence of cut-off functions is clearly given by mollifying functions of the type $f(\Id(\bullet,\mathscr{O})/n)$, with $n\in\IN$, $f:\IR\to [0,1]$ appropriate, and some fixed $\mathscr{O}\in M$. Here, one is ultimately confronted with estimating $|\Delta \Id(\bullet,\mathscr{O}) |$ away from the cut locus $\mathrm{Cut}(\mathscr{O})\cup \{\mathscr{O}\}$. Thus, comparison theorems suggest the necessity of both, a lower $\emph{and}$ an upper bound on the curvature to get (C4). In this sense, the fact that Theorem \ref{main} only requires a lower bound on the Ricci curvature to get (C4) is surprising. Indeed, our proof relied on the highly nontrivial existence of the $\chi_{\tilde{g}}$\rq{}s, which is proved in \cite{wang,cheeger} \emph{implicitely} by solving a properly chosen Dirichlet problem on the annulus $\mathrm{B}_{\tilde{g},2}(\mathscr{O})\setminus \overline{\mathrm{B}_{\tilde{g},1}(\mathscr{O})}$. As a consequence, one then only has to use the Laplacian comparison theorem in combination with the maximum principle to control $\chi_{\tilde{g}}$ and $\Id \chi_{\tilde{g}}$, and not to control $\Delta_{\tilde{g}}\chi_{\tilde{g}}$.  \vspace{1mm}

The rest of this paper is devoted to several applications of the concept of sequences of Laplacian cut-off functions (and thus of Theorem \ref{main} b)), aiming to illustrate the usefulness of this highly global concept: In Section \ref{graff}, we will prove that the existence of such a sequence together with a lower bound on the Ricci curvature implies Gagliardo-Nirenberg type $\mathsf{L}^q$-estimates of the gradient. Then in Section \ref{sed}, we are going to prove that $\mathsf{C}^{\infty}_{\mathrm{c}}(M)$ is dense in $\mathsf{H}^{2,q}(M)$, if $M$ admits a sequence of Laplacian cut-off functions and carries a $\mathsf{L}^q$-Calderon-Zygmund inequality. Finally, in Section \ref{bams}, we are going to introduce a new concept of 
$$
\text{\lq\lq{}positivity preservation with respect to a class $\mathscr{C}\subset \mathsf{L}^1_{\mathrm{loc}}(M)$\rq\rq{}},
$$
which for $\mathscr{C}=\mathsf{L}^2(M)$ is related to a conjecture of Braverman, Milatovic, Shubin, and for $\mathscr{C}=\mathsf{L}^{\infty}(M)$ to stochastic completeness. Here, we will prove that if $M$ admits a sequence of Laplacian cut-off functions and has a Kato decomposable Ricci curvature, then $M$ is $\mathsf{L}^q(M)$-positivity preserving for all $q\in [1,\infty]$. The proof of this fact relies on a new Markoff type result for covariant Schrödinger operators, which should be of independent interest.

\subsection{$\mathsf{L}^{q}$-properties of the gradient}\label{graff}

Let $D(m):=(2+\sqrt{m})^2$. We start by proving the following $\mathsf{L}^{q}$-properties of the gradient: 

\begin{Theorem}\label{lp} Let
$$
\mathsf{F}(M):=\left.\big\{  \Psi\right| \Psi\in  \mathsf{C}^{2}(M)\cap \mathsf{L}^{\infty}(M)\cap \mathsf{L}^{2}(M), \>\Delta\Psi \in\mathsf{L}^2(M) \big \}.
$$ 
If $M$ admits a sequence of Laplacian cut-off functions and satisfies $\mathrm{Ric}\geq -C$ for some constant $C>0$, then one has 
$$
|\Id \Psi| \in \bigcap_{q\in [2,4]}\mathsf{L}^q(M)\>\text{ for any $\Psi\in \mathsf{F}(M)$}.
$$
More precisely, in this situation, for all $\Psi\in \mathsf{F}(M)$ one has 
\begin{align*}
\left\|\Id \Psi\right\|^2_2= \left\langle \Psi,(-\Delta)\Psi\right\rangle, \left\|\Id \Psi\right\|^4_4 \leq D(m) \left\|\Psi \right\|^2_{\infty}\left(\left\|\Delta\Psi\right\|^2_2+C  \left\|\Id \Psi\right\|^2_2\right).
\end{align*}
\end{Theorem}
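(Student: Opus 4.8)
The plan is to prove the first identity by an integration-by-parts argument that is made rigorous through the sequence of Laplacian cut-off functions, and then to bootstrap it to the quartic estimate by inserting the function $|\Id\Psi|^2\Psi$ (or rather $\chi_n^2\Psi$ after a suitable further step) into the same identity and using the Bochner formula.

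First I would establish $\|\Id\Psi\|_2^2=\langle\Psi,(-\Delta)\Psi\rangle$. Let $(\chi_n)$ be a sequence of Laplacian cut-off functions. For each $n$ the function $\chi_n^2\Psi$ is in $\mathsf{C}^2_{\mathrm c}(M)$, so one has the honest identity $\int(\Id(\chi_n^2\Psi),\Id\Psi)=-\int\chi_n^2\Psi\,\Delta\Psi$. Expanding $\Id(\chi_n^2\Psi)=\chi_n^2\Id\Psi+2\chi_n\Psi\,\Id\chi_n$ gives
$$
\int\chi_n^2|\Id\Psi|^2=-\int\chi_n^2\Psi\,\Delta\Psi-2\int\chi_n\Psi(\Id\chi_n,\Id\Psi).
$$
The last term is bounded by $2\|\Id\chi_n\|_\infty\|\Psi\|_\infty\|\Id\Psi\|_2\,\mu(\mathrm{supp}\,\chi_n)^{1/2}$ — which is not obviously small — so instead I would estimate it by $2\|\Id\chi_n\|_\infty\|\Psi\|_\infty(\int\chi_n^2|\Id\Psi|^2)^{1/2}\cdot(\text{something})$; more carefully, using Cauchy–Schwarz as $2|\chi_n\Psi(\Id\chi_n,\Id\Psi)|\le\tfrac12\chi_n^2|\Id\Psi|^2+2\Psi^2|\Id\chi_n|^2$, one gets $\tfrac12\int\chi_n^2|\Id\Psi|^2\le\|\Psi\|_2\|\Delta\Psi\|_2+2\|\Id\chi_n\|_\infty^2\|\Psi\|_2^2$, which shows $\int\chi_n^2|\Id\Psi|^2$ is bounded uniformly in $n$; hence $|\Id\Psi|\in\mathsf{L}^2$ by monotone convergence (C1),(C2), and then the error term genuinely vanishes as $n\to\infty$ by (C3), while dominated convergence handles the right-hand side, yielding the identity.

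Next, the quartic bound. Knowing now $|\Id\Psi|\in\mathsf{L}^2$, I would apply the Bochner–Weitzenböck identity $\tfrac12\Delta|\Id\Psi|^2=|\nabla\Id\Psi|^2+(\Id\Delta\Psi,\Id\Psi)+\mathrm{Ric}(\Id\Psi,\Id\Psi)$, test it against $\chi_n^2\Psi^2$ (again legitimately compactly supported), integrate by parts, and use $\mathrm{Ric}\ge-C$ together with Kato's inequality $|\Id|\Id\Psi||\le|\nabla\Id\Psi|$ to absorb the bad terms. Concretely, after moving the Laplacian onto $\chi_n^2\Psi^2$ and expanding derivatives, the leading contribution on the left produces (up to controllable cut-off errors) a term comparable to $\int|\Id\Psi|^4$ coming from $\Id(\Psi^2)=2\Psi\Id\Psi$ paired against $\Id|\Id\Psi|^2$, whereas the right-hand side is controlled by $\|\Psi\|_\infty^2(\|\Delta\Psi\|_2^2+C\|\Id\Psi\|_2^2)$; tracking the constant through Kato's inequality and the Cauchy–Schwarz splittings is exactly where the explicit $D(m)=(2+\sqrt m)^2$ enters (the $\sqrt m$ coming from $|\Delta\Psi|\le\sqrt m\,|\nabla\Id\Psi|$ via the trace). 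Passing $n\to\infty$ with (C3),(C4) and Fatou on the left gives $\|\Id\Psi\|_4^4\le D(m)\|\Psi\|_\infty^2(\|\Delta\Psi\|_2^2+C\|\Id\Psi\|_2^2)$, and the $\mathsf{L}^q$ membership for $q\in[2,4]$ follows by interpolation.

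The main obstacle is the second step: one must choose the test function and organize the Cauchy–Schwarz absorptions so that \emph{every} term involving $\Id\chi_n$ or $\Delta\chi_n$ either carries a factor that is already known to be in $\mathsf{L}^2$ (so that (C3)/(C4) make it vanish) or can be absorbed into the good term $\int\chi_n^2|\Id\Psi|^4$ with a coefficient strictly less than $1$ — and simultaneously keep the surviving constant equal to the sharp $D(m)$. Getting a uniform-in-$n$ a priori bound on $\int\chi_n^2|\Id\Psi|^4$ before one knows $|\Id\Psi|\in\mathsf{L}^4$ is the delicate point, and it is essential that the Bochner argument be run with $\chi_n$ present throughout rather than taking the limit prematurely.
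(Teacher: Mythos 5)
Your proposal is correct in spirit and reaches the same conclusion, but it is organized rather differently from the paper's proof, and the paper's organization sidesteps exactly the delicate point you worry about at the end.

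For the identity $\|\Id\Psi\|_2^2=\langle\Psi,(-\Delta)\Psi\rangle$, you reprove it by hand through the cut-off sequence and a Cauchy--Schwarz absorption. That works, but the paper instead simply cites Corollaries~2.5 and 2.6 of Strichartz's paper, which give $|\Id\Psi|\in\mathsf{L}^2(M)$ and the global integration-by-parts identity under geodesic completeness alone, with no reference to (C3) or (C4). This is strictly cheaper: it only uses a first-order cut-off sequence, whose existence is guaranteed by Theorem~\ref{main}~a).

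For the quartic bound, you want to run the Bochner argument with the cut-off present from the start: test $\Delta|\Id\Psi|^2$ against $\chi_n^2\Psi^2$ and then absorb the error terms. The paper instead decouples the two halves. First it isolates Proposition~\ref{gag}, the Gagliardo--Nirenberg inequality
\[
\int|\Id\Phi|^4\,\z\le D(m)\,\|\Phi\|_\infty^2\left(\int|\Delta\Phi|^2\,\z-\int\mathrm{Ric}(\Id\Phi,\Id\Phi)\,\z\right)
\]
for \emph{compactly supported} $\Phi\in\mathsf{C}^2_{\mathrm c}(M)$, where the Bochner/Kato computation is done once and for all with no cut-off factors polluting the absorption. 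It then applies this to $\Phi=\chi_n\Psi$ and passes to the limit: Fatou's lemma on the left (using the pointwise convergence $\Id(\chi_n\Psi)\to\Id\Psi$), and on the right only the two limits $\int(\Delta(\chi_n\Psi))^2\to\int(\Delta\Psi)^2$ and $\int|\Id(\chi_n\Psi)|^2\to\int|\Id\Psi|^2$ need to be checked, both via the product rule expansions, (C1)--(C4), the already-known $|\Id\Psi|\in\mathsf{L}^2$, and dominated convergence. This means the paper never needs a uniform-in-$n$ a priori bound on $\int\chi_n^2|\Id\Psi|^4$, and never needs to absorb a term proportional to $\int\chi_n^2|\Id\Psi|^4$ with coefficient strictly less than~$1$; the object on the left is $\int|\Id(\chi_n\Psi)|^4$, automatically finite for each $n$, and Fatou does the rest. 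Your route can be made to work, but it requires exactly the careful bookkeeping you flag, whereas the paper's factorization into ``clean compactly supported GN inequality'' plus ``limit passing on the inputs'' makes the absorption issue disappear. The identification of $D(m)=(2+\sqrt m)^2$ coming from $|\Delta\Psi|\le\sqrt m\,|\nabla\Id\Psi|$ and Kato's inequality $|\Id|\Id\Psi||\le|\nabla\Id\Psi|$ matches the structure of the proof of Proposition~\ref{gag} (which the paper attributes to Grummt--Kolb).
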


Note that the assumptions of Theorem \ref{lp} on $M$ are satisfied, if $M$ is geodesically complete with a nonnegative Ricci curvature. We will need the following generally valid Gagliardo-Nirenberg type inequality for the proof of Theorem \ref{lp}, which should be of an independent interest:

\begin{Proposition}\label{gag} For all $\Psi\in\mathsf{C}^{2}_{\mathrm{c}}(M)$ one has
\begin{align}
\int\left|\Id \Psi \right|^4\z \leq D(m) \left\|\Psi \right\|^2_{\infty}\left(\int |\Delta\Psi|^2\z-\int \mathrm{Ric}(\Id\Psi,\Id \Psi)\z\right).\label{dfdx}
\end{align}
\end{Proposition}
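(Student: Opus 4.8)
The plan is to combine an integration by parts identity for $\int|\Id\Psi|^4\z$ with the integrated Bochner formula and then to close the resulting estimate by a Cauchy--Schwarz absorption argument. As a preliminary reduction, observe that both sides of (\ref{dfdx}) depend continuously on $\Psi$ with respect to $\mathsf{C}^2$-convergence of functions whose supports stay in a fixed compact set, so a routine mollification argument reduces the claim to the case $\Psi\in\mathsf{C}^{\infty}_{\mathrm{c}}(M)$, which I assume henceforth and which legitimates the differential identities used below. Write $\mathrm{Hess}\,\Psi:=\nabla\Id\Psi$ for the (symmetric $(0,2)$-tensor) Hessian of $\Psi$ and $|\mathrm{Hess}\,\Psi|$ for its pointwise norm, and note that $\tr\,\mathrm{Hess}\,\Psi=\Delta\Psi$.

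First I would record the \emph{integrated Bochner identity}. Integrating $\tfrac{1}{2}\Delta|\Id\Psi|^2=|\mathrm{Hess}\,\Psi|^2+\langle\Id\Psi,\Id(\Delta\Psi)\rangle_x+\mathrm{Ric}(\Id\Psi,\Id\Psi)$ over $M$ --- the left-hand side integrates to $0$ because $|\Id\Psi|^2$ is compactly supported --- and integrating by parts in the middle term via $\int\langle\Id\Psi,\Id(\Delta\Psi)\rangle_x\z=-\int|\Delta\Psi|^2\z$ yields
\[
\int|\mathrm{Hess}\,\Psi|^2\z=\int|\Delta\Psi|^2\z-\int\mathrm{Ric}(\Id\Psi,\Id\Psi)\z .
\]
Hence it remains to prove the curvature-free bound $\int|\Id\Psi|^4\z\le D(m)\,\|\Psi\|_{\infty}^2\int|\mathrm{Hess}\,\Psi|^2\z$.

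For this, I would integrate $|\Id\Psi|^2\Id\Psi$ (a compactly supported $1$-form) by parts: using the divergence theorem together with $\mathrm{div}\big(|\Id\Psi|^2\Id\Psi\big)=|\Id\Psi|^2\Delta\Psi+\langle\Id(|\Id\Psi|^2),\Id\Psi\rangle_x$ and $\Id(|\Id\Psi|^2)=2\,\mathrm{Hess}\,\Psi(\Id\Psi,\cdot)$ one gets
\[
\int|\Id\Psi|^4\z=-\int\Psi\,|\Id\Psi|^2\,\Delta\Psi\,\z-2\int\Psi\,\mathrm{Hess}\,\Psi(\Id\Psi,\Id\Psi)\,\z .
\]
Now bounding $|\Psi|\le\|\Psi\|_{\infty}$ pointwise and using $|\Delta\Psi|=|\tr\,\mathrm{Hess}\,\Psi|\le\sqrt{m}\,|\mathrm{Hess}\,\Psi|$ (Cauchy--Schwarz on the eigenvalues) together with $|\mathrm{Hess}\,\Psi(\Id\Psi,\Id\Psi)|\le|\mathrm{Hess}\,\Psi|\,|\Id\Psi|^2$, both terms on the right are dominated by $|\mathrm{Hess}\,\Psi|\,|\Id\Psi|^2$, whence
\[
\int|\Id\Psi|^4\z\le(2+\sqrt{m})\,\|\Psi\|_{\infty}\int|\mathrm{Hess}\,\Psi|\,|\Id\Psi|^2\,\z\le(2+\sqrt{m})\,\|\Psi\|_{\infty}\Big(\int|\mathrm{Hess}\,\Psi|^2\z\Big)^{1/2}\Big(\int|\Id\Psi|^4\z\Big)^{1/2},
\]
the last step being Cauchy--Schwarz again. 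If $\int|\Id\Psi|^4\z=0$ there is nothing to prove; otherwise divide by $\big(\int|\Id\Psi|^4\z\big)^{1/2}$ and square to obtain $\int|\Id\Psi|^4\z\le(2+\sqrt{m})^2\|\Psi\|_{\infty}^2\int|\mathrm{Hess}\,\Psi|^2\z=D(m)\,\|\Psi\|_{\infty}^2\int|\mathrm{Hess}\,\Psi|^2\z$. Combined with the integrated Bochner identity, this is (\ref{dfdx}).

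I do not expect a genuine obstacle: once one notices that after integrating the quartic term by parts \emph{both} resulting contributions are pointwise bounded by $|\mathrm{Hess}\,\Psi|\,|\Id\Psi|^2$ --- which is exactly where the dimensional constant enters, through $|\Delta\Psi|\le\sqrt{m}\,|\mathrm{Hess}\,\Psi|$ --- a single Cauchy--Schwarz step self-improves the estimate and delivers the constant $(2+\sqrt{m})^2=D(m)$. The only point that deserves care is the initial reduction to smooth $\Psi$, since Bochner's formula was used in its classical form, which a priori needs $\Psi\in\mathsf{C}^3$; by contrast, the integration by parts for the quartic term is already valid for $\Psi\in\mathsf{C}^2_{\mathrm{c}}(M)$ directly.
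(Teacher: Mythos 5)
Your proof is correct and follows the same route the paper indicates (Bochner's identity, integrated, combined with an integration by parts for $\int|\Id\Psi|^4\,\z$ and a Cauchy--Schwarz absorption yielding the constant $(2+\sqrt m)^2$), which is exactly what is credited to Lemma 2 of \cite{grummt}. One small presentational remark: what you actually use is $\int|\Id\Psi|^4\,\z=\int\langle|\Id\Psi|^2\Id\Psi,\Id\Psi\rangle\,\z=-\int\Psi\,\mathrm{div}\big(|\Id\Psi|^2\Id\Psi\big)\,\z$, i.e. the 1-form $|\Id\Psi|^2\Id\Psi$ is paired against $\Id\Psi$ and \emph{then} integrated by parts, rather than $\mathrm{div}\big(|\Id\Psi|^2\Id\Psi\big)$ having zero integral on its own; the displayed identity you write down is nonetheless the correct one.
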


The proof of Proposition \ref{gag} relies on Bochner\rq{}s identity and is implicitely included in the proof of Lemma 2 from \cite {grummt}, where, being motivated by the Eudlidean situation \cite{Lein}, this result is used in the context of essential self-adjointness problems corresponding to Schrödinger operators with singular magnetic potentials. The classical references for (\ref{dfdx}) are \cite{gag, niren}.

\begin{proof}[Proof of Theorem \ref{lp}] We can assume that $\Psi$ is real-valued. Firstly, combining Corollary 2.5 and Corollary 2.6 from \cite{strich} implies that $|\Id\Psi|$ is in $\mathsf{L}^2(M)$ and that the global integration by parts identity 
\begin{align}
\left\|\Id \Psi\right\|^2_2= \left\langle \Psi,(-\Delta)\Psi\right\rangle<\infty\label{l2}
\end{align}
holds true, a fact which only requires geodesic completeness and that $\Psi$, $\Delta\Psi$ are in $\mathsf{L}^2(M)$. Thus, it is sufficient to prove the asserted estimate for $\left\|\Id \Psi \right\|_4^4$. To this end, let $(\chi_n)$ be a sequence of Laplacian cut-off functions, and note that Proposition \ref{gag} implies the following inequality for each $n$:
\begin{align}
&\int\left|\Id (\chi_n\Psi) \right|^{4}\z \leq D(m) \left\|\chi_n\Psi \right\|^{2}_{\infty}\int\left[ \big(\Delta(\chi_n\Psi)\big)^2+C  |\Id (\chi_n\Psi)|^2\right]\z.\nn
\end{align}
Let us note that
\begin{align}
 \Id (\chi_n\Psi)=  \Psi\Id\chi_n+\chi_n\Id\Psi\to \Id \Psi\>\text{ pointwise as $n\to\infty$.}\label{erst}
\end{align}
In view of (\ref{erst}), the latter inequality in combination with Fatou\rq{}s lemma and (C1) shows that the proof is complete, if we can show 
\begin{align}
&\lim_{n\to \infty}\int \big(\Delta(\chi_n\Psi)\big)^2\z=\int (\Delta\Psi)^2\z,\label{ersti}\\
&\lim_{n\to \infty}\int|\Id (\chi_n\Psi)|^2\z=\int|\Id\Psi|^2\z\label{zweit}.
\end{align}
The limit relation (\ref{ersti}) follows from
\begin{align}
\Delta(\chi_n\Psi)=\Psi\Delta\chi_n+2(\Id\chi_n,\Id\Psi)+\chi_n\Delta\Psi, \label{frf}
\end{align}
so that
\begin{align*}
\big(\Delta(\chi_n\Psi)\big)^2=&\Psi^2(\Delta\chi_n)^2
+4\Psi(\Delta\chi_n)(\Id\chi_n,\Id\Psi)
+2\Psi(\Delta\chi_n)\chi_n\Delta\Psi\\
&+4(\Id\chi_n,\Id\Psi)^2
+4(\Id\chi_n,\Id\Psi)\chi_n\Delta\Psi+\chi_n^2(\Delta\Psi)^2, 
\end{align*}
using (C4) for the first, (C2), (C3) and (\ref{l2}) for the second, (C1), (C4) and dominated convergence for the third, (C3) and (\ref{l2}) for the fourth, (C1), (C3), (\ref{l2}) and dominated convergence for the fifth, and finally (C1), (C2) and dominated convergence for the sixth term. \\
Finally, (\ref{zweit}) follows from (\ref{erst}) and
$$
|\Id (\chi_n\Psi)|^2\leq 2|\Psi|^2|\Id\chi_n|^2+2|\chi_n|^2|\Id\Psi|^2
$$
and dominated convergence, using (C1), (C2), (C3). This completes the proof. 
\end{proof}

\subsection{Denseness of $\mathsf{C}^{\infty}_0(M)$ in $\mathsf{H}^{2,q}(M)$}\label{sed}

This section is devoted to a denseness result for the Sobolev space $\mathsf{H}^{2,q}(M)\subset \mathsf{L}^{q}(M)$, $q\in (1,\infty)$, which can be defined as the Banach space given by the closure of 
\begin{align*}
&\left.\big\{  \Psi\right| \Psi\in  \mathsf{C}^{\infty}(M)\cap \mathsf{L}^{q}(M), \>|\Id\Psi|,|\nabla\Id \Psi| \in\mathsf{L}^q(M) \big \}\>\text{ with respect to}\\
&\left\| \Psi\right\|_{2,q}:=  \left\| \Psi \right\|_q + \left\| \Id \Psi \right\|_q+ \left\| \nabla\Id \Psi\right\|_q,
\end{align*}
where as usual $\nabla\Id \Psi$ will be identified with the Hessian of $\Psi$. We add:

\begin{Definition} Let $q\in (1,\infty)$. We say that $M$ satisfies the \emph{$\mathsf{L}^q$-Calderon-Zygmund inequality}, if there are $C_q>0$, $D_q \geq 0$ such that for all $\Psi\in \mathsf{C}^{\infty}_c(M)$ one has
\begin{align}
\left\| \nabla\Id \Psi\right\|_q\leq C_q \left\| \Delta \Psi\right\|_q+D_q\left\|  \Psi\right\|_q.
\end{align}
\end{Definition}

With this notion at hand, we have:

\begin{Theorem}\label{sobo} Let $q\in (1,\infty)$. If $M$ admits a sequence of Laplacian cut-off functions and satisfies the $\mathsf{L}^q$-Calderon-Zygmund inequality, then $\mathsf{C}^{\infty}_c(M)$ is dense in $\mathsf{H}^{2,q}(M)$.
\end{Theorem}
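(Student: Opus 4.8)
The plan is to approximate an arbitrary element $\Psi$ of the defining class of $\mathsf{H}^{2,q}(M)$ — i.e.\ $\Psi\in\mathsf{C}^{\infty}(M)\cap\mathsf{L}^q(M)$ with $|\Id\Psi|,|\nabla\Id\Psi|\in\mathsf{L}^q(M)$ — by the compactly supported cutoffs $\chi_n\Psi\in\mathsf{C}^{\infty}_{\mathrm c}(M)$, and to show $\|\chi_n\Psi-\Psi\|_{2,q}\to 0$. Since such $\Psi$ are dense in $\mathsf{H}^{2,q}(M)$ by definition, and $\mathsf{C}^{\infty}_{\mathrm c}(M)$ is contained in this space, this suffices. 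The three terms in $\|\cdot\|_{2,q}$ must be handled in turn: $\|\chi_n\Psi-\Psi\|_q\to 0$ is immediate from (C1), (C2) and dominated convergence (with dominating function $|\Psi|\in\mathsf{L}^q$); $\|\Id(\chi_n\Psi)-\Id\Psi\|_q\to 0$ follows from $\Id(\chi_n\Psi)=\chi_n\Id\Psi+\Psi\,\Id\chi_n$, where the first piece converges to $\Id\Psi$ in $\mathsf{L}^q$ by dominated convergence via (C1), (C2), and the second piece is bounded in $\mathsf{L}^q$-norm by $\|\Id\chi_n\|_{\infty}\|\Psi\|_q\to 0$ by (C3).

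The only real issue is the Hessian term. Writing it out, $\nabla\Id(\chi_n\Psi)=\chi_n\nabla\Id\Psi+2\,\Id\chi_n\odot\Id\Psi+\Psi\,\nabla\Id\chi_n$ (symmetrized tensor product of the two $1$-forms in the middle term). The first summand $\to\nabla\Id\Psi$ in $\mathsf{L}^q$ by dominated convergence using (C1), (C2) and $|\nabla\Id\Psi|\in\mathsf{L}^q$; the second is controlled by $2\|\Id\chi_n\|_{\infty}\|\,|\Id\Psi|\,\|_q\to 0$ by (C3). The term $\Psi\,\nabla\Id\chi_n$ is the obstacle: the defining properties (C1)--(C4) only control $\Delta\chi_n=\mathrm{tr}(\nabla\Id\chi_n)$, not the full Hessian $\nabla\Id\chi_n$. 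This is exactly where the $\mathsf{L}^q$-Calderon-Zygmund inequality enters. I would apply the CZ inequality not to $\chi_n$ directly but to the product $\chi_n\Psi\in\mathsf{C}^{\infty}_{\mathrm c}(M)$, or better, organize the estimate so as to bound $\|\Psi\,\nabla\Id\chi_n\|_q$ in terms of quantities known to vanish. Concretely, since $\nabla\Id$ applied to a compactly supported function is controlled by $\Delta$ plus the function itself in $\mathsf{L}^q$, and $\Delta(\chi_n\Psi)=\Psi\Delta\chi_n+2(\Id\chi_n,\Id\Psi)+\chi_n\Delta\Psi$, one gets
$$
\|\nabla\Id(\chi_n\Psi)\|_q\le C_q\|\Delta(\chi_n\Psi)\|_q+D_q\|\chi_n\Psi\|_q,
$$
and the right-hand side is bounded uniformly in $n$: $\|\Psi\Delta\chi_n\|_q\le\|\Delta\chi_n\|_{\infty}\|\Psi\|_q$, $\|(\Id\chi_n,\Id\Psi)\|_q\le\|\Id\chi_n\|_{\infty}\|\,|\Id\Psi|\,\|_q$, and $\|\chi_n\Delta\Psi\|_q$ — wait, this needs $\Delta\Psi\in\mathsf{L}^q$, which follows from $|\nabla\Id\Psi|\in\mathsf{L}^q$ since $\Delta\Psi=\mathrm{tr}(\nabla\Id\Psi)$. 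So $\sup_n\|\nabla\Id(\chi_n\Psi)\|_q<\infty$; since $\chi_n\Psi\to\Psi$ and $\Id(\chi_n\Psi)\to\Id\Psi$ in $\mathsf{L}^q$ (first two steps) and the Hessians are $\mathsf{L}^q$-bounded, a weak-compactness / closedness argument for the closable operator $\nabla\Id$ on $\mathsf{L}^q$ identifies the weak limit of $\nabla\Id(\chi_n\Psi)$ as $\nabla\Id\Psi$, hence $\Psi\in\mathsf{H}^{2,q}(M)$ and in fact $\chi_n\Psi\rightharpoonup\Psi$ weakly in $\mathsf{H}^{2,q}(M)$.

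To upgrade weak convergence to norm convergence — which is what "dense" requires — I would use the Banach-space form of Mazur's lemma: convex combinations of the $\chi_n\Psi$ converge to $\Psi$ in $\mathsf{H}^{2,q}(M)$-norm, and these convex combinations still lie in $\mathsf{C}^{\infty}_{\mathrm c}(M)$. Alternatively, and more cleanly, one argues directly that $\Psi\,\nabla\Id\chi_n\to 0$ in $\mathsf{L}^q$: from the uniform bound above together with $\|\chi_n\nabla\Id\Psi-\nabla\Id\Psi\|_q\to 0$ and $\|\Id\chi_n\odot\Id\Psi\|_q\to 0$, one deduces $\sup_n\|\Psi\,\nabla\Id\chi_n\|_q<\infty$; then splitting $M$ into a large compact set $K$ (where $\chi_n\equiv 1$ eventually, so $\nabla\Id\chi_n=0$ there) and its complement (where $|\Psi|^q$ has small integral), together with the uniform bound, forces $\|\Psi\,\nabla\Id\chi_n\|_q\to 0$. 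I expect the main obstacle to be precisely the bookkeeping around the Hessian of $\chi_n$: making sure that the CZ inequality is invoked for a legitimately compactly supported test function and that all the resulting cross terms are genuinely dominated by $\|\Id\chi_n\|_{\infty}$, $\|\Delta\chi_n\|_{\infty}$, and $\mathsf{L}^q$-norms of the fixed data $\Psi,\Id\Psi,\nabla\Id\Psi$, so that (C3), (C4) and dominated convergence close the argument.
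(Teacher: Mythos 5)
Your main route --- establishing $\sup_n \|\nabla\Id(\chi_n\Psi)\|_q < \infty$ via Calderon--Zygmund, extracting a weak $\mathsf{L}^q$-limit for the Hessians, identifying it with $\nabla\Id\Psi$, and then upgrading weak $\mathsf{H}^{2,q}$-convergence to norm convergence by Mazur's lemma --- is correct and does prove the theorem, but it is a genuine detour from the paper's argument. The paper observes that the CZ inequality together with the pointwise bound $|\Delta\phi|\leq\sqrt{m}\,|\nabla\Id\phi|$ makes the Sobolev norm $\|\cdot\|_{2,q}$ \emph{equivalent} on $\mathsf{C}^{\infty}_c(M)$ to the norm $\|\Psi\|_{2,q,'}:=\|\Psi\|_q+\|\Id\Psi\|_q+\|\Delta\Psi\|_q$, so it suffices to show $\|\chi_n\Psi-\Psi\|_{2,q,'}\to 0$; the Hessian of $\chi_n$ then never appears and only (C1)--(C4) plus dominated convergence are needed. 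You in fact already have all the required estimates: your computation gives $\|\Psi\Delta\chi_n\|_q\to 0$, $\|(\Id\chi_n,\Id\Psi)\|_q\to 0$, and (implicitly) $\|(\chi_n-1)\Delta\Psi\|_q\to 0$, which together yield $\Delta(\chi_n\Psi)\to\Delta\Psi$ in $\mathsf{L}^q$. Applying CZ to the compactly supported differences $\chi_n\Psi-\chi_m\Psi$ then shows $(\chi_n\Psi)$ is $\|\cdot\|_{2,q}$-Cauchy with limit $\Psi$. This closes the argument without any weak-compactness or Mazur step and without invoking reflexivity of $\mathsf{L}^q$, which your route genuinely uses.

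Your ``alternatively, and more cleanly'' paragraph has a real gap and should be dropped. You assert that $\sup_n\|\Psi\,\nabla\Id\chi_n\|_q<\infty$, together with $\nabla\Id\chi_n$ vanishing eventually on any compact $K$ and $\int_{M\setminus K}|\Psi|^q$ being small, ``forces'' $\|\Psi\,\nabla\Id\chi_n\|_q\to 0$. This implication is false: an $\mathsf{L}^q$-bounded sequence that eventually vanishes on every compact need not tend to zero in $\mathsf{L}^q$ (normalized bumps escaping to infinity). Concretely, you cannot bound $\int_{M\setminus K}|\Psi|^q|\nabla\Id\chi_n|^q\z$ by something small times $\int_{M\setminus K}|\Psi|^q\z$, because $|\nabla\Id\chi_n|$ is precisely the uncontrolled quantity; what you would need is some uniform integrability or tightness of the family $\{|\Psi\,\nabla\Id\chi_n|^q\}$, and the $\mathsf{L}^q$-bound alone does not provide it. Your Mazur route does not rely on this, so the proposal as a whole stands, but the alternative argument is not a shortcut.
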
 

\begin{proof} Since $|\Delta \phi(x)|\leq \sqrt{m}|\nabla \Id \phi(x)|_x$ for any $\phi \in \mathsf{C}^{\infty}(M)$, the $\mathsf{L}^q$-Calderon-Zygmund assumption implies that the norm $\left\| \bullet \right\|_{2,q}$ is equivalent to the norm
$$
\left\| \Psi\right\|_{2,q,\rq{}}:=  \left\| \Psi \right\|_q + \left\| \Id \Psi \right\|_q+ \left\| \Delta \Psi\right\|_q
$$
on $\mathsf{C}^{\infty}_c(M)$. Thus, if $(\chi_n)$ is a sequence of Laplacian cut-off functions, and given a $\Psi\in \mathsf{C}^{\infty}(M)$ with $\left\| \Psi\right\|_{2,q,\rq{}}<\infty$, it is sufficient to prove that $\left\|\chi_n\Psi - \Psi \right\|_{2,q,\rq{}}\to 0$. Here,
$$
\int |\chi_n\Psi - \Psi |^q \z \to 0
$$
follows from (C1), (C2) and dominated convergence. Next, using
\begin{align}
\Id (\chi_n\Psi) = \Psi\Id\chi_n+\chi_n \Id \Psi
\end{align}
we get
$$
\int |\Id (\chi_n\Psi) - \Id\Psi |^q \z\leq C  \int | \Psi\Id\chi_n |^q\z +C \int |(\chi_n-1) \Id\Psi |^q \z  \to 0
$$
by (C3), (C1), (C2) and dominated convergence. Furthermore, using (\ref{frf}), we get
\begin{align*}
\int |\Delta(\chi_n\Psi) - \Delta\Psi |^q \z&\leq c_1\int |\chi_n-1|^q |\Delta\Psi|^q\z\\
&+c_2\int|\Psi|^q| \Delta\chi_n|^q\z+c_3\int|\Id \chi_n|^q|\Id \Psi|^q\z\to 0
\end{align*}
by  (C1), (C2), (C3), (C4) and dominated convergence.
\end{proof}

\begin{Remark} 1. If $\mathrm{Ric}\geq -C$ for some $C\geq 0$, then $M$ satisfies the $\mathsf{L}^2$-Calderon-Zygmund inequality, with $C_2=1, D_2=C$, thus in case $q=2$, we recover Theorem 1.1 in \cite{bandara} within the class of of $M$\rq{}s with nonnegative Ricci curvature (noting that the proof of Theorem 1.1 in \cite{bandara} heavily relies on Hilbert space arguments and thus does not extend directly to general $q$\rq{}s). We refer the reader to the monograph \cite{hebe} for results into this direction on the whole $\mathsf{L}^q$-scale.\\
2. Being motivated by the Calderon-Zygmund-Vitali technique \cite{cabre,lihe}, we conjecture:\vspace{2mm}

\emph{ If $M$ is geodesically complete with a nonnegative Ricci curvature, then $M$ satisfies the $\mathsf{L}^q$-Calderon-Zygmund inequality for all $q\in (1,\infty)$.} 
\vspace{2mm}

Corresponding arguments have also been used in \cite{badr} in order to derive interpolation results for $\mathsf{H}^{1,q}(M)$. 

\end{Remark}

\subsection{Positivity preservation and the BMS conjecture}\label{bams}

We continue with:

\begin{Definition}\label{kl} Let $\mathscr{C}\subset \mathsf{L}^1_{\mathrm{loc}}(M)$ be an arbitrary subset. We say that $M$ is $\mathscr{C}$-\emph{positivity preserving}, if the following implication holds true for any $f\in\mathscr{C}$\emph{:}
\begin{align}
 (-\Delta/2+1)f\geq 0\> \text{ as a distribution } \>\>\Rightarrow\>\> f\geq 0.\label{vor}
\end{align}
\end{Definition} 

The factor $1/2$ in (\ref{vor}) is irrelevant in applications (for it can be \lq\lq{}scaled away\rq\rq{} under typical scale invariant assumptions on $M$), but will be convenient for our probabilistic considerations. Definition \ref{kl} is motivated by the work of Braverman, Milatovic and Shubin \cite{Br}, where the authors were interested in essential self-adjointness results for Schrödinger operators on Riemannian manifolds. The precise connection is given in Proposition \ref{stoch} below, which illustrates Definition \ref{kl} very well. Let $E\to M$ denote an arbitrary smooth Hermitian vector bundle with Hermitian covariant derivative $\tilde{\nabla}$, and let $V$ be a Borel section in $\mathrm{End}(E)\to M$. If $V(x):E_x\to E_x$ is self-adjoint for a.e. $x\in M$, we will call $V$ a \emph{potential} on $E$.

\begin{Proposition}\label{stoch}\emph{a)} If $V$ is a potential on $E$ with $V\geq 0$ and $\left|V\right|\in \mathsf{L}^2_{\mathrm{loc}}(M)$, and if $M$ is $\mathsf{L}^2(M)$-positivity preserving, then the operator $\tilde{\nabla}^{\dagger}\tilde{\nabla}/2+V$ (defined on $\Gamma_{\mathsf{C}^{\infty}_{\mathrm{c}}}(M,E)$) is essentially self-adjoint in the Hilbert space $\Gamma_{\mathsf{L}^2}(M,E)$.\\
\emph{b)} Assume $q\in (1,\infty)$, $(V+V^{\dagger })/2\geq 0$, and $\left|V\right|\in \mathsf{L}^q_{\mathrm{loc}}(M)$. Then the operator $\tilde{\nabla}^{\dagger}\tilde{\nabla}/2+V$ (defined on $\Gamma_{\mathsf{C}^{\infty}_{\mathrm{c}}}(M,E)$) is closable in $\Gamma_{\mathsf{L}^q}(M,E)$, and if $M$ is $\mathsf{L}^q(M)$-positivity preserving, then its closure generates a contraction semigroup in the Banach space $\Gamma_{\mathsf{L}^q}(M,E)$.\\
\emph{c)} If $M$ is $\mathsf{C}^{\infty}(M)\cap \mathsf{L}^{\infty}(M)$-positivity preserving, then $M$ is stochastically complete, that is, one has 
$$
\mathbb{P}^x\{t<\zeta\}=1 \>\text{ for some/all $(t,x)\in (0,\infty)\times M$.} 
$$
\end{Proposition}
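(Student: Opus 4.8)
The plan is to prove Proposition \ref{stoch} c) by reducing stochastic completeness to the positivity-preservation property applied to a well-chosen $\mathsf{C}^{\infty}(M)\cap \mathsf{L}^{\infty}(M)$ function built from the heat semigroup. Recall that stochastic completeness is equivalent to the statement that the only bounded (smooth) solution $u\geq 0$ of $(-\Delta/2 + 1)u \le 0$ on $M$ is $u\equiv 0$; equivalently, writing $e^{t\Delta/2}$ for the minimal heat semigroup and $\mathbf{1}$ for the constant function, the deficiency $h := \mathbf{1} - e^{t_0\Delta/2}\mathbf{1}$ vanishes identically for one/all $t_0>0$, since $0\le e^{t\Delta/2}\mathbf{1}\le 1$ always holds and $M$ is stochastically complete iff $e^{t\Delta/2}\mathbf{1}=\mathbf{1}$.

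\medskip

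First I would set, for $\lambda>0$, the resolvent-type function
$$
w_\lambda(x) := 1 - \lambda\int_0^\infty e^{-\lambda t}\,\big(e^{t\Delta/2}\mathbf{1}\big)(x)\,\Id t = \lambda\,(\lambda - \Delta/2)^{-1}\!\big(\mathbf{1} - e^{\,\cdot\,\Delta/2}\mathbf{1}\big)\big|\text{-type object},
$$
or, more cleanly, work with $G_\lambda := \int_0^\infty e^{-\lambda t} e^{t\Delta/2}\mathbf{1}\,\Id t = (\lambda - \Delta/2)^{-1}\mathbf{1}$, which satisfies $0\le \lambda G_\lambda \le 1$ and solves $(-\Delta/2 + \lambda)G_\lambda = \mathbf{1}$ classically (by elliptic regularity $G_\lambda\in\mathsf{C}^\infty(M)$, and it is bounded). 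Take $\lambda = 1$ and put $u := \mathbf{1} - G_1$. Then $u\in\mathsf{C}^{\infty}(M)\cap\mathsf{L}^{\infty}(M)$ and a direct computation gives $(-\Delta/2 + 1)u = (-\Delta/2+1)\mathbf{1} - (-\Delta/2+1)G_1 = 1 - 1 = 0 \ge 0$. Hence the hypothesis that $M$ is $\mathsf{C}^{\infty}(M)\cap\mathsf{L}^{\infty}(M)$-positivity preserving applies to $u$, yielding $u\ge 0$, i.e. $G_1 \le \mathbf{1}$. That alone is automatic and gives nothing; the key is that it also applies to $-u$ only if $-u$ is again a supersolution, which it is not in general. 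So instead the right move is to apply the implication to $v := \mathbf{1} - 2G_1$ or, cleaner, to exploit minimality: the minimal heat semigroup is characterized by $e^{t\Delta/2}\mathbf{1}$ being the \emph{smallest} nonnegative supersolution bound, equivalently $G_1$ is the minimal nonnegative solution of $(-\Delta/2+1)G_1 = \mathbf{1}$; meanwhile $\mathbf{1}$ itself is \emph{a} nonnegative solution of the same equation. Therefore $\mathbf{1} - G_1 \ge 0$ with equality iff stochastic completeness holds, and what positivity preservation must be used for is the reverse: I would instead apply \eqref{vor} to $f := G_1 - \mathbf{1}$ after checking $(-\Delta/2+1)(G_1-\mathbf{1}) = 0 \ge 0$ — this is the same computation — giving $G_1 - \mathbf{1}\ge 0$, hence $G_1 = \mathbf{1}$, i.e. $(1-\Delta/2)^{-1}\mathbf{1} = \mathbf{1}$, which upon inverting says $e^{t\Delta/2}\mathbf{1} = \mathbf{1}$ for all $t$, i.e. $\mathbb{P}^x\{t<\zeta\}=1$: stochastic completeness.

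\medskip

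So the clean skeleton is: (1) recall $G_1 = (1-\Delta/2)^{-1}\mathbf{1}$ is a well-defined element of $\mathsf{C}^{\infty}(M)\cap\mathsf{L}^{\infty}(M)$ with $0\le G_1\le 1$ and $(-\Delta/2+1)G_1 = \mathbf{1}$ in the distributional (indeed classical) sense — this uses standard minimal-heat-semigroup theory and local elliptic regularity; (2) observe $f := G_1 - \mathbf{1}$ satisfies $(-\Delta/2+1)f = 0 \ge 0$ as a distribution and $f\in\mathsf{C}^{\infty}(M)\cap\mathsf{L}^{\infty}(M)\subset\mathscr{C}$; (3) invoke $\mathsf{C}^{\infty}(M)\cap\mathsf{L}^{\infty}(M)$-positivity preservation to conclude $f\ge 0$, so together with $f\le 0$ from (1) we get $G_1\equiv\mathbf{1}$; (4) translate $\int_0^\infty e^{-t} e^{t\Delta/2}\mathbf{1}\,\Id t = \mathbf{1}$ back, via uniqueness of Laplace transforms and the semigroup property, into $e^{t\Delta/2}\mathbf{1}=\mathbf{1}$, equivalently $\mathbb{P}^x\{t<\zeta\}=1$ for all $(t,x)$, which is exactly stochastic completeness; finally note the "some/all" equivalence in $t$ is classical (e.g. Grigor'yan). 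The main obstacle is step (1)–(4) bookkeeping: making sure the object $G_1$ really is the resolvent of the \emph{minimal} realization (so that $\mathbf{1}$ is genuinely a competing solution and the inequality $G_1\le\mathbf{1}$ holds a priori while the reverse needs the hypothesis), and that the Laplace-transform inversion in step (4) is legitimate — both are standard but must be cited carefully (Grigor'yan's survey on stochastic completeness, or Davies' book). No genuinely hard analysis is required once the identification of $G_1$ is in place.
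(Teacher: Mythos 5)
You address only part c); parts a) and b) are not attempted at all. In the paper, a) is handled via the domination/Kato-inequality argument from \cite{Br} (essential self-adjointness reduces to showing $\mathrm{Ker}\big((\tilde{\nabla}^{\dagger}\tilde{\nabla}/2+V+1)^*\big)=\{0\}$, which Kato's inequality converts into a scalar positivity-preservation problem), and b) is cited from Milatovic \cite{mila1}. You would need to supply or at least cite these arguments for the proposal to count as a proof of the stated Proposition.

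For part c), your argument is correct but takes a slightly different route from the paper. The paper simply invokes Grigor'yan's characterization: $M$ is stochastically complete iff every bounded nonnegative $u$ with $(-\Delta/2+1)u=0$ vanishes; given such a $u$, one applies $\mathsf{C}^{\infty}\cap\mathsf{L}^{\infty}$-positivity preservation to $-u$ (which satisfies $(-\Delta/2+1)(-u)=0\geq 0$), obtaining $u\leq 0$ and hence $u\equiv 0$. You instead construct the specific deficiency function $f:=G_1-\mathbf{1}$ with $G_1=(1-\Delta/2)^{-1}\mathbf{1}$, verify $(-\Delta/2+1)f=0$ and $f\leq 0$, and then positivity preservation forces $f\geq 0$, so $G_1\equiv\mathbf{1}$. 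Both are valid; yours is more constructive but has to carry a bit more heat-semigroup bookkeeping (justifying $(-\Delta/2+1)G_1=\mathbf{1}$ and the Laplace-inversion step, though the latter is trivial since $0\le P_t\mathbf{1}\le 1$ forces $P_t\mathbf{1}\equiv\mathbf{1}$ from $\int_0^\infty e^{-t}P_t\mathbf{1}\,\Id t=\mathbf{1}$). The paper's version is shorter because it only quotes the characterization.

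One piece of self-confusion worth flagging: in the middle you assert that applying \eqref{vor} to $-u=G_1-\mathbf{1}$ "only works if $-u$ is again a supersolution, which it is not in general" — but here it trivially is, since $(-\Delta/2+1)(-u)=-(-\Delta/2+1)u=0\geq 0$. You then correctly apply positivity preservation to exactly this $f=G_1-\mathbf{1}$ in the next sentence, so the final argument is fine, but the intermediate sentence contradicts what you actually do and should be deleted.
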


\begin{proof} a) This statement is certainly implicitely included in \cite{Br}. In fact, it follows immediately from combining the fact that $\tilde{\nabla}^{\dagger}\tilde{\nabla}/2+V$ is essentially self-adjoint if 
$$
\mathrm{Ker}\big((\tilde{\nabla}^{\dagger}\tilde{\nabla}/2+V+1)^*\big)=\{0\},
$$
with a variant of Kato\rq{}s inequality (Theorem 5.7 in \cite{Br}), which states that for any $f\in\Gamma_{\mathsf{L}^1_{\mathrm{loc}}}(M,E)$ with $\tilde{\nabla}^{\dagger}\tilde{\nabla} f\in\Gamma_{\mathsf{L}^1_{\mathrm{loc}}}(M,E)$ one has 
\begin{align}
(-\Delta/2)|f| \leq \mathrm{Re} \big(  (\tilde{\nabla}^{\dagger}\tilde{\nabla}/2) f ,  \mathrm{sign}(f)\big)\>\>\text{ as distributions,}\label{kat}
\end{align}
 where $\mathrm{sign}(f)\in\Gamma_{\mathsf{L}^{\infty}}(M,E)$ is defined by 
$$
\mathrm{sign}(f)(x):=\begin{cases}&\f{f(x)}{|f(x)|}, \>\text{ if } \>f(x)\ne 0\\
&0, \>\text{ else.}\end{cases}
$$
Note here that (\ref{kat}) holds true for any $M$.\\
b) The asserted closability follows from Lemma 2.1 in \cite{mila1} (where the required geodesic completeness is not used), and the other statement can be proved precisely like Theorem 1.3 in \cite{mila1}, noting that the bounded geometry assumption there is only used to deduce that $M$ is $\mathsf{L}^q(M)$-positivity preserving.\\
c) This follows from the fact $M$ is stochastically complete, if and only if any bounded nonnegative solution $f$ on $M$ of $(-\Delta/2+1)f=0$ satisfies $f\equiv 0$ \cite{gri}. 
\end{proof}

\begin{Remark} 1. Proposition \ref{stoch} a) lead Braverman, Milatovic and Shubin to the following conjecture:
\vspace{2mm}

\emph{ If $M$ is geodesically complete, then $M$ is $\mathsf{L}^2(M)$-positivity preserving.} 

\vspace{2mm}

Note that the above \emph{BMS-conjecture} has remained open for more than 10 years by now. On the other hand, it has already been noted in Theorem B.2 in \cite{Br}, that if $M$ admits a sequence of Laplacian cut-off functions, then $M$ is $\mathsf{L}^2(M)$-positivity preserving. Thus, so far, only Riemannian manifolds with a $\mathsf{C}^{\infty}$-bounded geometry have been known to be $\mathsf{L}^2(M)$-positivity preserving. \\
2. Obviously, the test class $\mathsf{C}^{\infty}(M)\cap \mathsf{L}^{\infty}(M)$ in Proposition \ref{stoch} c) can be made much smaller, if necessary. 
\end{Remark}

Using Theorem \ref{main}, we can prove the following variant of the BMS-conjecture, where we regard the Ricci curvature as a potential on $\mathrm{T}^* M$:

\begin{Theorem}\label{bms} Assume that $M$ admits a sequence of Laplacian cut-off functions and that there is a decomposition $\mathrm{Ric}=R_+-R_-$ into potentials $R_{\pm}\geq 0$ on $\mathrm{T}^*M$ such that $\left|R_+\right|\in \mathsf{L}^1_{\mathrm{loc}}(M)$, $\left|R_-\right|\in \mathsf{K}(M)$. Then $M$ is $\mathsf{L}^q$-positivity preserving for any $q\in [1,\infty]$. In particular, the conclusions of Proposition \ref{stoch} hold true.
\end{Theorem}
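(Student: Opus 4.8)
My plan is to reduce the assertion, by Kato's inequality, to a Liouville-type property for nonnegative $(-\Delta/2+1)$-supersolutions, and then to establish that property by running the underlying distributional differential inequality through the heat semigroup $\mathrm{e}^{t\Delta/2}$. So let $q\in[1,\infty]$, let $f\in\mathsf{L}^q(M)$ be real (the complex case follows by splitting off real and imaginary parts), and assume $(-\Delta/2+1)f\geq 0$ distributionally; then $\nu:=(-\Delta/2+1)f$ is a positive Radon measure, and in particular $\Delta f$ is a signed Radon measure. Applying a measure-valued version of Kato's inequality to $-f$ (the form (\ref{kat}) quoted above presupposes $\Delta f\in\mathsf{L}^1_{\mathrm{loc}}$, but the analogue for a Radon measure also holds) one obtains that $g:=f_-\in\mathsf{L}^q(M)$ satisfies $g\geq 0$ and $(-\Delta/2+1)g\leq 0$ distributionally. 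It therefore suffices to prove that $g\in\mathsf{L}^q(M)$, $g\geq 0$, $(-\Delta/2+1)g\leq 0$ as a distribution imply $g=0$.

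For this I claim that $g\leq\mathrm{e}^{-t}\,\mathrm{e}^{t\Delta/2}g$ holds $\mu$-a.e.\ for every $t>0$; since $g\geq 0$ and $\mathrm{e}^{t\Delta/2}$ is a contraction on $\mathsf{L}^q(M)$, this yields $\|g\|_q\leq\mathrm{e}^{-t}\|g\|_q$, hence $g=0$. To prove the claim, fix $0\leq\varphi\in\mathsf{C}^\infty_{\mathrm{c}}(M)$, put $\psi_s:=\mathrm{e}^{s\Delta/2}\varphi$, and set $h(s):=\int g\,\psi_s\,\z$; using that $\psi_s$ and $\Delta\psi_s=\mathrm{e}^{s\Delta/2}\Delta\varphi$ lie in $\mathsf{L}^1(M)\cap\mathsf{L}^\infty(M)$, one checks that $h$ is continuous on $[0,\infty)$ with $h(0)=\langle g,\varphi\rangle$ and differentiable on $(0,\infty)$ with $h'(s)=\tfrac12\int g\,\Delta\psi_s\,\z$. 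The crucial point is that the distributional inequality $(-\Delta/2+1)g\leq 0$ survives when tested against the nonnegative --- but non-compactly supported --- function $\psi_s$, i.e.\ that $h'(s)\geq h(s)$ for $s>0$; this is where the sequence $(\chi_n)$ of Laplacian cut-off functions enters. Indeed $\chi_n\psi_s$ is a legitimate nonnegative test function, so $\langle g,(-\Delta/2+1)(\chi_n\psi_s)\rangle\leq 0$; expanding
\begin{align*}
(-\Delta/2+1)(\chi_n\psi_s)=\chi_n\,(-\Delta/2+1)\psi_s-\tfrac12\,\psi_s\,\Delta\chi_n-(\Id\chi_n,\Id\psi_s)
\end{align*}
and letting $n\to\infty$, the $\Delta\chi_n$-term vanishes by (C4) together with $\psi_s\in\mathsf{L}^{q'}(M)$ (with $\tfrac1q+\tfrac1{q'}=1$), the $(\Id\chi_n,\Id\psi_s)$-term vanishes by (C3) \emph{provided} $\|\Id\psi_s\|_{q'}<\infty$, and the remaining terms converge by (C1), (C2) and dominated convergence. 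Hence $\langle g,(-\Delta/2+1)\psi_s\rangle\leq 0$, so $h'\geq h$, so $\mathrm{e}^{-s}h(s)$ is nondecreasing, which gives $h(t)\geq\mathrm{e}^t h(0)$ for all $t\geq 0$; as $\varphi\geq 0$ was arbitrary, $\mathrm{e}^{t\Delta/2}g\geq\mathrm{e}^t g$ $\mu$-a.e., which is the claim.

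It remains to supply the missing ingredient, namely $\|\Id(\mathrm{e}^{s\Delta/2}\varphi)\|_{q'}<\infty$ for $\varphi\in\mathsf{C}^\infty_{\mathrm{c}}(M)$ and every $q'\in[1,\infty]$ --- and this is the only point at which the curvature hypothesis is used. One invokes the intertwining $\Id\,\mathrm{e}^{s\Delta/2}=\mathrm{e}^{-s(\nabla^\dagger\nabla+\mathrm{Ric})/2}\,\Id$, where $\nabla^\dagger\nabla+\mathrm{Ric}$ is the Hodge Laplacian on $1$-forms (a consequence of the Weitzenb\"ock formula $\Id\Id^\dagger+\Id^\dagger\Id=\nabla^\dagger\nabla+\mathrm{Ric}$ on $1$-forms), together with the Feynman--Kac--It\^{o} type domination
\begin{align*}
\big|\,\mathrm{e}^{-s(\nabla^\dagger\nabla+\mathrm{Ric})/2}\omega\,\big|\;\leq\;\mathrm{e}^{-s(-\Delta/2-|R_-|/2)}|\omega|\qquad\text{pointwise},
\end{align*}
which is legitimate because $R_+\geq 0$ and $\mathrm{Ric}\geq-|R_-|$, with $|R_+|\in\mathsf{L}^1_{\mathrm{loc}}(M)$ ensuring that the left-hand operator is well defined; this domination --- for a curvature term which is only Kato on its negative part --- is the Markov-type property for covariant Schr\"odinger operators referred to in the introduction. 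Since $|R_-|\in\mathsf{K}(M)$, the scalar Schr\"odinger semigroup $\mathrm{e}^{-s(-\Delta/2-|R_-|/2)}$ is, by the usual Kato-class estimate, bounded on every $\mathsf{L}^r(M)$ with operator norm locally uniform in $s$, whence $\|\Id(\mathrm{e}^{s\Delta/2}\varphi)\|_{q'}\leq C_s\,\|\,|\Id\varphi|\,\|_{q'}<\infty$. This closes the previous paragraph for all $q\in[1,\infty]$, and the remaining conclusions then follow from Proposition \ref{stoch}.

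The main obstacle is the interplay of the last two paragraphs: pushing the distributional inequality through the heat flow on a non-compact $M$ forces one to approximate the non-compactly supported test functions $\mathrm{e}^{s\Delta/2}\varphi$ by compactly supported ones, and among the resulting commutator errors the gradient term is the genuinely hard one --- it is not killed by soft arguments but requires the uniform $\mathsf{L}^{q'}$-control of $\Id(\mathrm{e}^{s\Delta/2}\varphi)$, which in turn drags the Weitzenb\"ock identity and the Kato-class hypothesis on $R_-$ into play (a lower Ricci bound would do just as well; $|R_+|\in\mathsf{L}^1_{\mathrm{loc}}(M)$ alone would not). A secondary technical point is the measure-valued Kato inequality in the reduction step when $\nu$ has a singular part; and if one prefers, the endpoint $q=\infty$ can be handled separately by reducing it to the stochastic completeness of $M$, which itself follows from $|R_-|\in\mathsf{K}(M)$ via the same domination estimate.
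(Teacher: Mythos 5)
Your argument is correct in substance and reaches the result by a genuinely different route than the paper. The paper never invokes Kato's inequality: instead of reducing to a Liouville property for $g=f_-$, it works directly with $f$ and tests against $\chi_n\Psi$ with $\Psi:=(H+1)^{-1}\phi$ (the resolvent applied to a nonnegative test function $\phi$), showing $\int f\phi\,\z=\lim_n\int f(-\Delta/2+1)(\chi_n\Psi)\,\z\geq 0$ outright. This avoids both the measure-valued Kato inequality (a Brezis--Ponce-type extension that you correctly flag as needed, since $\Delta f$ may have a singular part, and which is an additional nontrivial tool) and the parabolic regularity bookkeeping in your Gr\"onwall step (continuity at $s=0$, differentiation under the integral sign, consistency of the semigroup across $\mathsf{L}^r$-spaces). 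The technical heart, however, is the same in both arguments: after cutting off with $\chi_n$, the term $(\Id\chi_n,\Id\cdot)$ is the one that cannot be killed softly, and both proofs dispose of it by pushing $\Id$ through the semigroup via Weitzenb\"ock, $\Id\,\mathrm{e}^{-tH}=\mathrm{e}^{-tH_1}\Id$ with $H_1=\nabla^{\dagger}\nabla/2+\mathrm{Ric}/2$, and then invoking the $\mathsf{L}^r\leadsto\mathsf{L}^r$ boundedness of the covariant Schr\"odinger semigroup with $\left|R_-\right|\in\mathsf{K}(M)$ (the paper's Proposition~\ref{markoff}; your Feynman--Kac--It\^{o} domination is exactly how that proposition is proved). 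In the paper this yields $\Id\Psi=(H_1+1)^{-1}\Id\phi\in\Omega^1_{\mathsf{L}^{\tilde q}}(M)$ for all $\tilde q$; in your proof it yields $\|\Id\psi_s\|_{q'}<\infty$. One small point worth making explicit: the intertwining requires geodesic completeness (so that $H$ and $H_1$ are the unique self-adjoint realizations), and this is indeed available under your hypotheses via Theorem~\ref{main}.a). Your approach has the virtue of producing the pointwise comparison $\mathrm{e}^{t\Delta/2}g\geq\mathrm{e}^t g$ as an intermediate output and of exposing the Liouville-type content clearly; the paper's is shorter and more elementary because it sidesteps the Kato reduction and the time-derivative entirely.
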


Note, in particular, that the assumptions in Theorem \ref{bms} on $M$ are satisfied, if $M$ is geodesically complete with a nonnegative Ricci curvature. We will need the generally valid covariant Markoff type result Proposition \ref{markoff} below for the proof (to be precise, for the non-Hilbertian case $q\in [1,\infty]\setminus\{2\}$, which has not been considered at all in \cite{Br}; see \cite{mila2} for a proof that applies to $q\in (1,\infty)$). Define a function
$$
\tilde{c}:[1,\infty]\longrightarrow (0,\infty),\>\tilde{c}(q):=\begin{cases}
&1-\f{1}{q}, \text{ if $1<q<\infty$}\\
&1,\text{ else.}
\end{cases}
$$
Then we have:

\begin{Proposition}\label{markoff} Let $V$ be a potential on the smooth Hermitian vector bundle $E\to M$ which admits a decomposition $V=V_+-V_-$ into potentials $V_{\pm}\geq 0$ such that $\left|V_+\right|\in \mathsf{L}^1_{\mathrm{loc}}(M)$, $\left|V_-\right|\in \mathsf{K}(M)$, and let $\tilde{\nabla}$ be a Hermitian covariant derivative on $E\to M$. If $H_{\tilde{\nabla}}(0)$ denotes the Friedrichs realization in $\Gamma_{\mathsf{L}^2}(M,E)$ of $\tilde{\nabla}^{\dagger}\tilde{\nabla}/2$, then the form sum $H_{\tilde{\nabla}}(V):=H_{\tilde{\nabla}}(0)\dotplus V$ is a well-defined self-adjoint semibounded from below operator in $\Gamma_{\mathsf{L}^2}(M,E)$, and for any $\delta>1$, $q\in [1,\infty]$ there is a $C(\delta,V_-,q)> 0$, which does not depend on $\tilde{\nabla}$, such that for all $t\geq 0$, all $\lambda\in \IC$ with 
$$
\mathrm{Re}(\lambda)< \min\big(C(\delta,V_-,q) ,\min\sigma(H_{\tilde{\nabla}}(V))\big),
$$
and all $k\in \IN$ one has  
\begin{align}
&\left\|   \mathrm{e}^{-t H_{\tilde{\nabla}}(V)}  \mid_{\Gamma_{\mathsf{L}^2\cap \mathsf{L}^q}(M,E)}\right\|_{q}\leq \delta^{\tilde{c}(q)}\mathrm{e}^{tC(\delta,V_-,q)},\label{heat}\\
&\left\|\big(H_{\tilde{\nabla}}(V)-\lambda\big)^{-k}\mid_{\Gamma_{\mathsf{L}^2\cap \mathsf{L}^q}(M,E)}\right\|_{q}\leq \f{ \delta^{\tilde{c}(q)}}{\big(-C(\delta,V_-,q)-\mathrm{Re}(\lambda)\big)^{k}}.\label{res}
\end{align}
\end{Proposition}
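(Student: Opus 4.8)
The plan is to establish \eqref{heat} first, since the resolvent bound \eqref{res} follows from it by a standard Laplace-transform argument: for $\mathrm{Re}(\lambda)<C(\delta,V_-,q)$ one writes
\begin{align*}
\big(H_{\tilde\nabla}(V)-\lambda\big)^{-k}=\f{1}{(k-1)!}\int_0^\infty t^{k-1}\mathrm{e}^{\lambda t}\,\mathrm{e}^{-tH_{\tilde\nabla}(V)}\,\Id t
\end{align*}
on $\Gamma_{\mathsf{L}^2\cap\mathsf{L}^q}$, takes $\mathsf{L}^q$-norms inside the integral using \eqref{heat}, and evaluates the resulting Gamma integral to get the factor $\big(-C(\delta,V_-,q)-\mathrm{Re}(\lambda)\big)^{-k}$. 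The self-adjointness and semiboundedness of the form sum $H_{\tilde\nabla}(V)=H_{\tilde\nabla}(0)\dotplus V$ should come from KLMN together with the fact that $V_-$, being in the Kato class, is infinitesimally form-bounded with respect to $H_{\tilde\nabla}(0)$ with constants that do not depend on $\tilde\nabla$ — this is where the diamagnetic comparison with the scalar operator $-\Delta/2$ enters, so that the relevant Kato-class smallness constant is governed purely by $|V_-|$ and the scalar heat kernel of $M$; in particular the additive constant $C(\delta,V_-,q)$ in the lower bound of $\min\sigma$ can be chosen depending only on $\delta$, $V_-$, $q$.

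For \eqref{heat} itself the natural route is a Feynman--Kac representation for the semigroup $\mathrm{e}^{-tH_{\tilde\nabla}(V)}$ acting on sections, followed by a diamagnetic (pointwise) inequality. Concretely, for $\psi\in\Gamma_{\mathsf{L}^2\cap\mathsf{L}^q}(M,E)$ one has, $\mathbb{P}^x$-almost surely on $\{t<\zeta\}$, a stochastic parallel transport $/\!/_{t}$ along Brownian paths and the identity
\begin{align*}
\big(\mathrm{e}^{-tH_{\tilde\nabla}(V)}\psi\big)(x)=\mathbb{E}^x\Big[1_{\{t<\zeta\}}\,\mathscr{A}_t\,/\!/_t^{-1}\psi(X_t)\Big],
\end{align*}
where $\mathscr{A}_t$ is the $\mathrm{End}(E_x)$-valued multiplicative functional solving $\Id\mathscr{A}_t=-\mathscr{A}_t\,(/\!/_t^{-1}V(X_t)/\!/_t)\,\Id t$, $\mathscr{A}_0=\mathrm{id}$. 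Since $V=V_+-V_-$ with $V_+\geq0$, one gets the pointwise operator-norm bound $|\mathscr{A}_t|_x\leq \exp\big(\int_0^t |V_-(X_s)|\,\Id s\big)$ (here $V_+$ only helps), hence the diamagnetic inequality
\begin{align*}
\big|\mathrm{e}^{-tH_{\tilde\nabla}(V)}\psi\big|(x)\leq \mathbb{E}^x\Big[1_{\{t<\zeta\}}\,\mathrm{e}^{\int_0^t|V_-(X_s)|\,\Id s}\,|\psi|(X_t)\Big]=\big(\mathrm{e}^{-t(-\Delta/2+(-|V_-|))}|\psi|\big)(x),
\end{align*}
reducing everything to the scalar Schrödinger semigroup with Kato-class potential $-|V_-|$, for which $\tilde\nabla$ has disappeared. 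The desired estimate then becomes: the scalar semigroup $\mathrm{e}^{-t(-\Delta/2-|V_-|)}$ satisfies $\|\mathrm{e}^{-t(-\Delta/2-|V_-|)}\|_{q\to q}\leq \delta^{\tilde c(q)}\mathrm{e}^{tC(\delta,V_-,q)}$. For $q=1$ and $q=\infty$ this is a direct Feynman--Kac / Khasminskii-lemma estimate: by the Kato condition one picks $t_0$ with $\sup_x\int_0^{t_0}\mathbb{E}^x[1_{\{s<\zeta\}}|V_-(X_s)|]\,\Id s$ so small that Khasminskii's lemma gives $\sup_x\mathbb{E}^x[1_{\{t\le t_0<\zeta\}}\exp(\int_0^{t}|V_-(X_s)|\,\Id s)]\leq\delta$, and then iterating over $\lfloor t/t_0\rfloor$ subintervals yields the bound with $C(\delta,V_-,q)=(\log\delta)/t_0$ and prefactor $\delta=\delta^{\tilde c(1)}=\delta^{\tilde c(\infty)}$; the general $q\in(1,\infty)$ case follows by Riesz--Thorin interpolation between the $\mathsf{L}^1$ and $\mathsf{L}^\infty$ bounds, which is exactly what produces the exponent $\tilde c(q)=1-1/q$ on $\delta$ and keeps $C(\delta,V_-,q)$ the same. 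One should also check consistency with the spectral bound: $\mathrm{Re}(\lambda)<\min\sigma(H_{\tilde\nabla}(V))$ guarantees $(H_{\tilde\nabla}(V)-\lambda)^{-k}$ exists as a bounded operator on $\Gamma_{\mathsf{L}^2}$, and the $\mathsf{L}^2\cap\mathsf{L}^q$ restriction is where the interpolated heat bound is applied.

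The main obstacle I anticipate is the rigorous justification of the Feynman--Kac formula for $\mathrm{e}^{-tH_{\tilde\nabla}(V)}$ on a general (possibly stochastically incomplete) Riemannian manifold with $V$ only Kato-decomposable and with a merely Hermitian (not necessarily metric-compatible in any stronger sense) covariant derivative $\tilde\nabla$: one must make sense of $H_{\tilde\nabla}(0)\dotplus V$ via forms, approximate $V_-$ by bounded potentials (e.g. $V_-^{(n)}:=\min(V_-,n)$), prove the Feynman--Kac formula for the bounded approximants where the multiplicative functional $\mathscr{A}_t$ is unproblematic, and then pass to the limit using Khasminskii's lemma to control $\mathbb{E}^x[\exp(\int_0^t|V_-(X_s)|\,\Id s)]$ uniformly and monotone/dominated convergence on both the operator and the probabilistic side. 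A second, more technical, point is ensuring that all constants — the Kato smallness time $t_0$, hence $C(\delta,V_-,q)$, and the KLMN form-bound — genuinely depend only on $|V_-|$ (through the scalar heat kernel of $M$) and not on $\tilde\nabla$; this is precisely what the diamagnetic inequality buys us, so the argument has to be arranged so that the bundle data is eliminated before any constant is named. Everything else — the Laplace-transform step for \eqref{res}, the interpolation, and Fatou/dominated-convergence passages — is routine once this reduction to the scalar Kato-class Schrödinger semigroup is in place.
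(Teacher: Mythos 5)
Your overall strategy matches the paper's: reduce the covariant semigroup to a scalar Kato--class Schr\"odinger semigroup via a pointwise domination, control the latter with Khasminskii's lemma and the Feynman--Kac formula, then obtain the resolvent bound \eqref{res} from \eqref{heat} by the Laplace transform $\big(H_{\tilde{\nabla}}(V)-\lambda\big)^{-k}=\tfrac{1}{(k-1)!}\int_0^\infty t^{k-1}\mathrm{e}^{\lambda t}\mathrm{e}^{-tH_{\tilde{\nabla}}(V)}\,\Id t$. The paper packages the reduction step as a citation of semigroup domination from \cite{guen} (and the well-definedness of the form sum from \cite{G1}), while you spell out the stochastic-parallel-transport Feynman--Kac formula and the Gronwall bound $|\mathscr{A}_t|\le\exp\big(\int_0^t|V_-(X_s)|\,\Id s\big)$ directly; that is a valid alternative packaging of the same input and your observation that everything must be reduced to a $\tilde\nabla$-independent scalar object before any constant is named is exactly the right organizing principle.

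The genuine gap is the interpolation step for $1<q<\infty$. You claim Riesz--Thorin between your $\mathsf{L}^1$ bound and your $\mathsf{L}^\infty$ bound produces the exponent $\tilde c(q)=1-1/q$ on $\delta$. It does not: interpolating $\|T\|_{1\to1}\le\delta\,\mathrm{e}^{tC}$ with $\|T\|_{\infty\to\infty}\le\delta\,\mathrm{e}^{tC}$ gives $\|T\|_{q\to q}\le\delta^{1/q}\,\delta^{1-1/q}\,\mathrm{e}^{tC}=\delta\,\mathrm{e}^{tC}$ — the prefactor is $\delta$, with no improvement for intermediate $q$. Moreover $\delta^{1/q}>1$, so you cannot absorb the excess into the exponential factor uniformly in $t$ (it fails at $t=0$). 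To get the stated $\delta^{1-1/q}$ the paper does \emph{not} interpolate; it runs a direct H\"older argument at each fixed $q\in(1,\infty)$: split $\mathrm{e}^{\int_0^t w_-(X_s)\,\Id s}\,|f|(X_t)$ inside the expectation via H\"older with exponents $q^*,q$, apply Khasminskii's lemma (with the rescaled Kato potential $q^*w_-$) to the first factor — which is where the single power $\delta^{q/q^*}$, i.e.\ $\delta^{1-1/q}$ after taking $q$-th roots, comes from — and handle the second factor using $\int_M p(t,x,y)\,\mu(\Id x)\le1$. The $\mathsf{L}^\infty$ case is the plain Feynman--Kac/Khasminskii bound, and $\mathsf{L}^1$ is obtained from $\mathsf{L}^\infty$ by duality using self-adjointness of $\mathrm{e}^{-tH_{\Id}(w)}$ (not by a ``direct'' Feynman--Kac estimate as you assert — on a general manifold that would require a time-reversal argument). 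Your interpolation route still proves a correct and, for the downstream applications in this paper, fully sufficient statement with prefactor $\delta$ in place of $\delta^{\tilde c(q)}$, but it does not prove the Proposition as stated.
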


\begin{proof} The well-definedness of $H_{\tilde{\nabla}}(V)$ is the main result of \cite{G1}. Let us recall that a refined version of Khasminskii\rq{}s lemma (Lemma 3.9 in \cite{pallara}) implies that
for any $\tilde{w}\in\mathsf{K}(M)$ and any $\delta>1$ there is a $C(\tilde{w},\delta)> 0$ such that
for all $t\geq 0$,
\begin{align}
\sup_{x\in M} \mathbb{E}^x\left[\mathrm{e}^{\int^t_0 \left|\tilde{w}(X_s)\right|\Id
s}1_{\{t<\zeta\}}\right]
\leq \delta \mathrm{e}^{tC(\tilde{w},\delta)}. \label{way0}
\end{align}
If the scalar potentials $w:M\to \IR$, $w_+,w_-:M\to [0,\infty)$ are given by 
\begin{align*}
&w_+(x):=\text{ smallest eigenvalue of }\> V_+(x):E_x\longrightarrow E_x\\
&w_-(x):=\text{ largest eigenvalue of }\> V_-(x):E_x\longrightarrow E_x\\
&w(x):=w_+(x)-w_-(x),
\end{align*}
then $w_+\in\mathsf{L}^1_{\mathrm{loc}}(M)$, $w_-\in\mathsf{K}(M)$. Let $f\in \Gamma_{\mathsf{L}^2\cap \mathsf{L}^q}(M,E)$. \\
Let us first prove (\ref{heat}). One has the semigroup domination \cite{guen}
\begin{align}
\left\|\mathrm{e}^{-t H_{\tilde{\nabla}}(V)}f\right\|_{q}\leq \left\|\mathrm{e}^{-t H_{\Id}(w)}|f|\right\|_{q},\label{sem}
\end{align}
noting that $H_{\Id}(w)$ is a usual scalar Schrödinger operator of the form $-\Delta/2+w$ in $\mathsf{L}^2(M)$, since we consider the differential \lq\lq{}$\Id$\rq\rq{} as a covariant derivative on the trivial line bundle over $M$. Assume first that $1<q<\infty$ and let $q^*:=1/(1-1/q)$ be the Hölder exponent of $q$. Then by (\ref{sem}), the Feynman-Kac formula 
$$
\mathrm{e}^{-t H_{\Id}(w)}h(x)= \mathbb{E}^x\left[1_{\{t<\zeta\}}\mathrm{e}^{-\int^t_0 w(X_s)\Id s}h(X_s)\right],\>h\in\mathsf{L}^2(M),
$$
and Hölder\rq{}s inequality we get
\begin{align*}
\left\|\mathrm{e}^{-t H_{\tilde{\nabla}}(V)}f\right\|_q^q&\leq \int_M \mathbb{E}^x\left[1_{\{t<\zeta\}}\mathrm{e}^{\int^t_0 w_-(X_s)\Id s}|f|(X_s)\right]^q \mu(\Id x)\\
&\leq \int_M \mathbb{E}^x\left[1_{\{t<\zeta\}}\mathrm{e}^{q^*\int^t_0 w_-(X_s)\Id s}\right]^{q/q^*} \mathbb{E}^x\left[1_{\{t<\zeta\}}|f|^q(X_s)\right] \mu(\Id x),
\end{align*}
which using (\ref{way0}) and $\int_M p(t,x,y) \mu(\Id x)\leq 1$ is
$$
\leq \delta^{q/q^*}\mathrm{e}^{\f{tC(\delta,qw_-)}{q^*}} \left\|f\right\|_q^q\equiv \delta^{q/q^*}\mathrm{e}^{tC(\delta,V_-,q)q} \left\|f\right\|_q^q.
$$
If $q=\infty$, then the desired bound follows immediately from (\ref{sem}), the Feynman-Kac formula for $\mathrm{e}^{-t H_{\Id}(w)}$ and (\ref{way0}), for example with
$$
C(\delta,V_-,\infty):=C(\delta,w_-).
$$
If $q=1$, then we can proceed as follows: Let $\bigcup_n K_n =M$ be a relatively compact exhaustion of $M$. Then we have 
\begin{align*}
&\int \mathrm{e}^{-t H_{\Id}(w)} |f| \cdot 1_{K_n} \z=\int  |f| \mathrm{e}^{-t H_{\Id}(w)}  1_{K_n}\z\\
&\leq \left\|\mathrm{e}^{-t H_{\Id}(w)}\mid_{\mathsf{L}^2(M)\cap \mathsf{L}^{\infty}(M) }\right\|_{\infty}  \left\|f\right\|_{1},
\end{align*}
where we have used the self-adjointness of $\mathrm{e}^{-t H_{\Id}(w)}$ for the equality, and the $q=\infty$ case for the inequality. Using monotone convergence this implies
$$
\left\|\mathrm{e}^{-t H_{\Id}(w)}|f|\right\|_{1}\leq  \delta\mathrm{e}^{tC(\delta,V_-,\infty)}\left\|f\right\|_{1}
$$
and the claim follows from (\ref{sem}).\\
Finally, in order to see (\ref{res}), one now just has to note that by the above, for any complex number $\lambda$ the integral 
$$
\f{1}{(k-1)!}\int^{\infty}_0 t^{k-1}\mathrm{e}^{\lambda t} \mathrm{e}^{- t H_{\nabla}(V)} f\Id t
$$
converges in $\Gamma_{\mathsf{L}^q}(M,E)$ if $\mathrm{Re}(\lambda)< C(\delta,V_-,q)$, and in $\Gamma_{\mathsf{L}^2}(M,E)$ if $\mathrm{Re}(\lambda)< \min\sigma(H_{\tilde{\nabla}}(V))$, the latter being equal to $\big(H_{\tilde{\nabla}}(V)-\lambda\big)^{-k}f$.
\end{proof}

Note that we do not have to assume anything on $M$ in Proposition \ref{markoff}, which relies on the fact that we consider $\mathsf{L}^q\leadsto\mathsf{L}^q$ smoothing. Of course one needs bounds of the form $p(t,x,y)\leq C(t)$, if one is interested in $\mathsf{L}^q\leadsto\mathsf{L}^{\tilde{q}}$, $1\leq q\leq \tilde{q}\leq \infty$ smoothing results (cf. \cite{guen}). 

\begin{proof}[Proof of Theorem \ref{bms}] We will use the notation from Proposition \ref{markoff} on covariant Schrödinger operators. Let $q\in [1,\infty]$. We are going to prove that for all $f\in \mathsf{L}^q(M)$ with
$$
(-\Delta/2+1)f\geq 0
$$
and all $0\leq \phi\in\mathsf{C}^{\infty}_{\mathrm{c}}(M)$, one has $\int f \phi \z\geq 0$. To this end, let $H:=H_{\Id}(0)\geq 0$ denote the Friedrichs realization of $-\Delta/2$ in $\mathsf{L}^2(M)$ and let $H_1:=H_{\nabla}(\mathrm{Ric})\geq 0$ denote the Friedrichs realization in $\Omega^1_{\mathsf{L^2}}(M)$ of the Laplace-Beltrami operator $-\Delta_1 /2= \nabla^{\dagger}\nabla/2+\mathrm{Ric}/2$ on $1$-forms, where the latter equality is precisely Weitzenböck\rq{}s formula. Note that by geodesic completeness, $H$ and $H_1$ are the respectively unique self-adjoint realizations. Let
\begin{align*}
\Psi:=&(H+1)^{-1}\phi=\int^{\infty}_0 \mathrm{e}^{-t} \mathrm{e}^{-t H} \phi \ \Id t\\
&=\int^{\infty}_0\mathrm{e}^{-t }\int_M p(t,\bullet,y)\phi(y)\mu(\Id y)\Id t:M\longrightarrow  [0,\infty).
\end{align*}
Then $\Psi$ is smooth with $(-\Delta/2+1)\Psi=\phi$ \cite{gri}, and by Theorem \ref{markoff} we have
\begin{align*}
(H+1)^{-1}: \mathsf{L}^2(M)\cap \mathsf{L}^{\tilde{q}}(M)\longrightarrow  \mathsf{L}^2(M)\cap \mathsf{L}^{\tilde{q}}(M)\>\text{ for all $\tilde{q}\in [1,\infty]$},
\end{align*}
thus $\Psi,\Delta\Psi\in\mathsf{L}^{\tilde{q}}(M)$. We will also show later that
\begin{align}
|\Id\Psi|\in\mathsf{L}^{\tilde{q}}(M)\>\text{ for all $\tilde{q}\in [1,\infty]$},\label{zuz}
\end{align}
which we assume for the moment. Up to (\ref{zuz}), the proof of Theorem \ref{bms} will be complete, if we can show 
\begin{align}
\int f (-\Delta/2+1)\Psi\z=\lim_{n\to\infty}\int f (-\Delta/2+1)(\chi_n\Psi) \z,\label{zuzi}
\end{align}
with $(\chi_n)$ a sequence of Laplacian cut-off functions. In order to see (\ref{zuzi}), note that it is clear from (C1), (C2) and dominated convergence that
$$
\int f \Psi \z=\lim_{n\to\infty}\int f \chi_n\Psi \z.
$$
Furthermore, using (\ref{frf}) we get
\begin{align*}
\int f \Delta(\chi_n\Psi) \z=&\int f \Psi\Delta\chi_n \z+2\int (f\Id \chi_n,\Id\Psi)\z+\int_M f \chi_n\Delta\Psi \z.
\end{align*}
Here, 
\begin{align}
\lim_{n\to\infty}\int f \Psi\Delta\chi_n \z=0\>\> \text{ and  }\>\lim_{n\to\infty}\int f \chi_n\Delta\Psi \z=\int f \Delta\Psi \z\nn
\end{align}
follow, respectively, from (C4), and (C1), (C2) combined with dominated convergence, and finally 
$$
\lim_{n\to\infty}\int (f\Id \chi_n,\Id\Psi)\z=0
$$
follows from (C3) and (\ref{zuz}), and we are done.

It remains to prove (\ref{zuz}): The geodesic completeness of $M$ implies (see for example the appendix of \cite{anton})
$$
\Id \mathrm{e}^{-t H}\phi= \mathrm{e}^{-t H_1}\Id\phi\>\text{ for all $t>0$,}
$$
 so that using
$$
\int^{\infty}_0 \mathrm{e}^{-t} \mathrm{e}^{-t H} \Id t= (H+1)^{-1},\>\int^{\infty}_0 \mathrm{e}^{-t} \mathrm{e}^{-t H_1} \Id t= (H_1+1)^{-1}
$$
one easily gets the identity
$$
\Id\Psi=(H_1+1)^{-1}\Id \phi.
$$
Thus (\ref{zuz}) is implied by Theorem \ref{markoff}, since clearly $\Id\phi\in \Omega^1_{ \mathsf{L}^{\tilde{q}}  }(M)$ for all $\tilde{q}\in [1,\infty]$. This completes the proof of Theorem \ref{bms}.

\end{proof}

Let us add some comments on Theorem \ref{bms}: The property 
$$
\text{\lq\lq{}$\mathscr{C}$-positivity preservation\rq\rq{} }
$$
and thus also the BMS-conjecture both admit an equivalent formulation which makes sense on an arbitrary local Dirichlet space \cite{sturm,lenz}. On the other hand, the concept of sequences of cut-off functions does not immediately extend to this setting, so that it would be very interesting to find a proof of the fact that geodesic completeness and a nonnegative Ricci curvature imply $\mathsf{L}^q$-positivity preservation for any $q\in [1,\infty]$ which does not use sequences of cut-off functions.\\
Finally, we would like to remark that Proposition \ref{stoch}.c) and Theorem \ref{bms} combine to a completely new proof of the classical \cite{yau} fact that geodesic completeness and a nonnegative Ricci curvature imply stochastic completeness. Note, however, that in the context of stochastic completeness, one can in fact allow certain unbounded negative parts of the Ricci curvature, namely, it is enough \cite{hsu1} to assume that the Ricci curvature is bounded from below in radial direction by a quadratic function of $\Id(\bullet,\mathscr{O})$ (see also \cite{hack, stroock} for textbook versions and variations of the latter result).

\end{document}